\documentclass[11pt]{article}

\usepackage[margin=1.2in]{geometry}

\usepackage[T1]{fontenc}
\usepackage[utf8]{inputenc}
\usepackage[USenglish, UKenglish, english]{babel}
\usepackage{amssymb}
\usepackage{amsthm}
\usepackage{amsmath}
\usepackage[mathscr]{euscript}
\usepackage{enumitem}
\usepackage{graphicx}
\usepackage{MnSymbol}
 \let\mathscr\relax
\usepackage[scr]{rsfso}
\usepackage{tikz-cd}
\usepackage{tikz}
\usetikzlibrary{matrix,arrows,decorations.pathmorphing}
\usepackage{hyperref}
\usepackage{marginnote}
\usetikzlibrary{arrows.meta}

% number different lines in casa-by-case definitions 
\usepackage{cases}

%FONT

\usepackage{libertine}

\theoremstyle{definition}
\newtheorem{defin}{Definition}[section]
\theoremstyle{definition}
\newtheorem{ex}[defin]{Example}
\theoremstyle{plain}
\newtheorem{theo}[defin]{Theorem}
\theoremstyle{plain}
\newtheorem{prop}[defin]{Proposition}
\theoremstyle{plain}
\newtheorem{lem}[defin]{Lemma}
\theoremstyle{plain}

\theoremstyle{definition}
\newtheorem{rmk}[defin]{Remark}
\theoremstyle{definition}

\theoremstyle{definition}

\theoremstyle{plain}

\theoremstyle{definition}

\theoremstyle{definition}

\theoremstyle{plain}

\theoremstyle{plain}

\theoremstyle{plain}

\theoremstyle{definition}

\theoremstyle{plain}

\theoremstyle{definition}

\theoremstyle{definition}
\newtheorem*{defin*}{Definition}
\theoremstyle{definition}
\newtheorem*{ex*}{Example}
\theoremstyle{plain}
\newtheorem*{theo*}{Theorem}
\theoremstyle{plain}
\theoremstyle{plain}
\newtheorem*{conj*}{Conjecture}
\newtheorem*{prop*}{Proposition}
\theoremstyle{plain}
\newtheorem*{lem*}{Lemma}
\theoremstyle{plain}
\newtheorem*{cor*}{Corollary}
\theoremstyle{definition}
\newtheorem*{rmk*}{Remark}
\theoremstyle{definition}
\newtheorem*{exe*}{Exercise}

\theoremstyle{plain}
\newtheorem{theoA}{Theorem}

\theoremstyle{plain}

\theoremstyle{plain}

\theoremstyle{plain}

\theoremstyle{plain}

\numberwithin{equation}{section}
%numbering equation within section

\usepackage{parskip}

\makeatletter
\def\thm@space@setup{%
  \thm@preskip=\parskip \thm@postskip=0pt
}
\makeatother

%spazio verticale tra paragrafi

\setlist[enumerate]{label=(\roman*)}
%indici numeri romani

\def\R{{\mathbf{R}}} %Twisted/parabolic induction

\def\Br{{\rm Br}}

\def\ab{{\rm Ab}}

\def\Br{{\rm Br}}

\def\n{{\mathbf{N}}} %normalizer

\def\c{{\mathbf{C}}} %centralizer
 
\def\z{{\mathbf{Z}}} %center

\def\O{{\mathbf{O}}} %radical

 %Lusztig series

 %chain

 %chain

 %chain

 %chain

 %p-chains

 %family of chains

 %couples chain-character

 %Projective representation

 %Projective representation

\def\u{{\bf{U}}}

\def\G{{\mathbf{G}}} %algebraic group

\def\H{{\mathbf{H}}} %algebraic subgroup

\def\K{{\mathbf{K}}} %algebraic subgroup

\def\L{{\mathbf{L}}} %Levi

 %Levi

\def\T{{\mathbf{T}}} %Torus

 %Torus

\def\B{{\mathbf{B}}} %Borel

 %Parabolic

 %Parabolic

 %algebraic subgroup
 
 %chain of Levi subgroups

 %chain of Levi subgroups

\def\CL{{\mathcal{L}}} %couples (chain of Levis)-character

 %chain of Brauer--Lusztig triples

 %chain of Brauer--Lusztig triples

%Brauer--Lusztig Triples

%Set of cuspidal pairs

%Harish-Chandra series

\makeatletter
\newcommand{\uset}[3][0ex]{%
  \mathrel{\mathop{#3}\limits_{
    \vbox to#1{\kern-7\ex@
    \hbox{$\scriptstyle#2$}\vss}}}}
\makeatother

\newcommand{\wh}[1]{\widehat{#1}}

%center section title

\usepackage{sectsty}
\allsectionsfont{\centering}

\usepackage{xparse,etoolbox}

\newcommand{\blocktheorem}[1]{%
  \csletcs{old#1}{#1}% Store \begin
  \csletcs{endold#1}{end#1}% Store \end
  \RenewDocumentEnvironment{#1}{o}
    {\par\addvspace{1.5ex}
     \noindent\begin{minipage}{\textwidth}
     \IfNoValueTF{##1}
       {\csuse{old#1}}
       {\csuse{old#1}[##1]}}
    {\csuse{endold#1}
     \end{minipage}
     \par\addvspace{1.5ex}}
}

\blocktheorem{theoA}
\blocktheorem{conjA}
\blocktheorem{paraA}
\blocktheorem{corA}

\makeatletter
\def\blfootnote{\gdef\@thefnmark{}\@footnotetext}
\makeatother

\title{
{\huge\bf The simplicial complex of Brauer pairs of a finite reductive group}\\
\author{\Large Damiano Rossi}
\date{}
\blfootnote{\emph{$2010$ Mathematical Subject Classification:} $20$J$05$, $20$G$40$, $20$C$20$, $55$P$91$
\\
\emph{Key words and phrases:} Simplicial complexes, Brauer pairs, finite reductive groups, generalised Harish-Chandra theory.
\\
This work was carried out during a stay of the author at the Mathematisches Forschungsinstitut Oberwolfach funded by a Leibniz Fellowship. The author would like to thank Marc Cabanes and Gunter Malle for useful comments on an earlier version of this paper.
}}

\begin{document}

\renewcommand{\thetheoA}{\Alph{theoA}}

\renewcommand{\thecorA}{\Alph{corA}}

\selectlanguage{english}

\maketitle

\begin{abstract}
In this paper we study the simplicial complex induced by the poset of Brauer pairs ordered by inclusion for the family of finite reductive groups. In the defining characteristic case the homotopy type of this simplicial complex coincides with that of the Tits building thanks to a well-known result of Quillen. On the other hand, in the non-defining characteristic case, we show that the simplicial complex of Brauer pairs is homotopy equivalent to a simplicial complex determined by generalised Harish-Chandra theory. This extends earlier results of the author on the Brown complex and makes use of the theory of connected subpairs and twisted block induction developed by Cabanes and Enguehard.
\end{abstract}

\section*{Introduction}

The poset $\mathcal{S}_\ell^\star(G)$ of non-trivial $\ell$-subgroups of a finite group $G$, with respect to a prime $\ell$ dividing the order of $G$, gives rise to a simplicial complex $\Delta(\mathcal{S}_\ell^\star(G))$ known as the Brown complex. This simplicial complex was first introduced by Brown in \cite{Bro75} and its homotopy properties were later described by Quillen in \cite{Qui78}. In particular, for $G=\G^F$ a finite reductive group in characteristic $p$, Quillen showed that the Brown complex $\Delta(\mathcal{S}_p^\star(\G^F))$ is homotopy equivalent to the Tits building of $\G^F$. More recently, the author considered the remaining non-defining characteristic case, where $p\neq \ell$, and showed in \cite{Ros-Homotopy} that the homotopy type of $\Delta(\mathcal{S}_\ell^\star(\G^F))$ can be described in terms of the generic Sylow theory developed by Brou\'e and Malle \cite{Bro-Mal92}. This result was then used to obtain a connection between the so-called local-global conjectures in group representation theory and certain statements in generalised Harish-Chandra theory introduced in \cite{Ros-Generalized_HC_theory_for_Dade} and \cite{Ros-Unip}.

In this paper we extend the above-mentioned homotopy equivalences to a representation theoretic setting by replacing $\ell$-subgroups with $\ell$-Brauer pairs. More precisely, let $B$ be a Brauer $\ell$-block of a finite group $G$ and denote by $\mathcal{S}_\ell^\star(B)$ the poset of non-trivial $B$-Brauer pairs ordered by inclusion as defined in Section \ref{sec:Preliminaries}. Observe that $\mathcal{S}_\ell^\star(B)$ is non-empty if and only if $B$ has positive defect and that the associated simplicial complex $\Delta(\mathcal{S}_\ell^\star(B))$ yields a natural generalisation of the Brown complex: if $B_0(G)$ is the principal $\ell$-block of $G$, there exists a natural homeomorphism between $\Delta(\mathcal{S}_\ell^\star(B_0(G)))$ and the Brown complex $\Delta(\mathcal{S}_\ell^\star(G))$. 

We now focus on the case where $G=\G^F$ is a finite reductive group in characteristic $p$ as before. When $p=\ell$, in analogy with Quillen's result, it turns out that for each $p$-block $B$ with positive defect the simplicial complex $\Delta(\mathcal{S}_p^\star(B))$ is homotopy equivalent to the Tits building. This is shown in Theorem \ref{thm:Equivalence for defining characteristic} below. More interesting is the case $p\neq \ell$. In this situation, for a Brauer $\ell$-block $B$, we use the theory of connected subpairs and twisted block induction introduced by Cabanes and Enguehard \cite{Cab-Eng99} to construct a poset $\CL_{e_\ell(q)}^\star(B)$ consisting of pairs $(\L,b_\L)$ where $\L$ is an $e_\ell(q)$-split Levi subgroup of $(\G,F)$, with $e_\ell(q)$ the multiplicative order of $q$ modulo $\ell$, and $b_\L$ is a Brauer $\ell$-block of $\L^F$. We refer the reader to Section \ref{sec:Non-defining characteristic} for a precise definition. The advantage of the simplicial complex $\Delta(\CL_{e_\ell(q)}^\star(B))$ is that it can be determined by generalised Harish-Chandra theory. We can describe the homotopy type of $\Delta(\mathcal{S}_\ell^\star(B))$ as follows. 

\begin{theoA}
\label{thm:Main, Homotopy equivalence in non-defining characteristic}
Let $\G$ be a connected reductive group defined over an algebraically closed field of characteristic $p$ and consider $F:\G\to \G$ a Frobenius endomorphism endowing the algebraic variety $\G$ with an $\mathbb{F}_q$-rational structure. Suppose that $\ell\in\pi(\G,F)$ as defined in Definition \ref{def:Prime condition} and that $\ell$ does not divide the order of $\z(\G)^F$. Then there exists a $\G^F$-homotopy equivalence 
\[\Delta\left(\mathcal{S}_\ell^\star(B)\right)\simeq\Delta\left(\CL_{e_\ell(q)}^\star(B)\right)\]
for every Brauer $\ell$-block $B$ of $\G^F$ with non-trivial defect and where $e_\ell(q)$ denotes the multiplicative order of $q$ modulo $\ell$.
\end{theoA}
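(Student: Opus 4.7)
The plan is to construct an explicit $\G^F$-equivariant poset map $\alpha : \CL_{e}^\star(B) \to \mathcal{S}_\ell^\star(B)$, where $e := e_\ell(q)$, and then apply a $\G^F$-equivariant version of Quillen's fiber theorem \cite{Qui78}. Following the strategy used in \cite{Ros-Homotopy} for the Brown complex, I would define $\alpha$ by sending $(\L, b_\L)$ to the Brauer pair $\bigl(\O_\ell(\z(\L)^F), b_\L\bigr)$. The hypotheses $\ell \in \pi(\G,F)$ and $\ell \nmid |\z(\G)^F|$ ensure, via Cabanes--Enguehard \cite{Cab-Eng99}, that $\c_\G(\O_\ell(\z(\L)^F)) = \L$ and hence $\c_{\G^F}(\O_\ell(\z(\L)^F)) = \L^F$, so that $b_\L$ is canonically a block of this centraliser. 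The definition of $\CL_e^\star(B)$ through twisted block induction is precisely what guarantees that $\alpha(\L, b_\L)$ is a $B$-Brauer pair, and the order-compatibility and $\G^F$-equivariance of $\alpha$ are routine consequences of the definitions.

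The heart of the argument is the fiber analysis. For $(P, b_P) \in \mathcal{S}_\ell^\star(B)$, I would identify the appropriate fiber of $\alpha$ over $(P, b_P)$ with the set of pairs $(\L, b_\L) \in \CL_e^\star(B)$ satisfying $\O_\ell(\z(\L)^F) \le P$ together with the block-compatibility condition inherited from Brauer pair inclusion. I expect this fiber to admit a canonical extremal element $(\L_0, b_{\L_0})$, with $\L_0$ the minimal $e$-split Levi of $(\G, F)$ whose centre contains $P$ (obtained, as in \cite{Ros-Homotopy}, by centralising the $\Phi_e$-part of the torus $\z(\c_\G^\circ(P))^\circ$) and $b_{\L_0}$ the unique block of $\L_0^F$ that Cabanes--Enguehard-corresponds to $b_P$. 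Every other pair in the fiber is then comparable to $(\L_0, b_{\L_0})$, so the fiber is conically contractible. An equivariant version of Quillen's Theorem A then produces the required $\G^F$-homotopy equivalence.

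The main obstacle I foresee is the block-theoretic content of the fiber description. One must prove that, for every $(P, b_P)$, the block $b_{\L_0}$ of $\L_0^F$ exists and is unique, and that each pair $(\L, b_\L)$ in the fiber is obtained from $(\L_0, b_{\L_0})$ by iterated twisted block induction along the chain $\L_0 \subseteq \L$. This is the block-theoretic refinement of the \emph{smallest Levi} argument underlying \cite{Ros-Homotopy} and it rests on the uniqueness and transitivity of Cabanes--Enguehard twisted induction together with the compatibility of Brauer pair inclusion with this induction. Both ingredients require the hypotheses $\ell \in \pi(\G,F)$ (to control centralisers of $\ell$-subgroups and ensure the needed connectedness) and $\ell \nmid |\z(\G)^F|$ (to avoid central-defect pathologies). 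Once these are in place, the construction $(P, b_P) \mapsto (\L_0, b_{\L_0})$ is canonical, hence $\c_{\G^F}(P, b_P)$-equivariant, which upgrades Quillen's homotopy equivalence to the $\G^F$-equivariant statement claimed in the theorem.
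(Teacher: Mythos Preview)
Your map $\alpha$ is well-defined (note it is order-reversing, so regard it as $\alpha:\CL_e^\star(B)^{\rm op}\to\mathcal{S}_\ell^\star(B)$) and its image actually lands in the subposet $\ab_\ell^\star(B)^{\rm ac}$ of abelian almost-centric $B$-Brauer pairs, since $\c_{\G^F}(\z(\L)^F_\ell)=\L^F$ forces $\z(\L)^F_\ell$ to be almost-centric. The gap is in the fibre analysis over a \emph{general} $(P,b_P)\in\mathcal{S}_\ell^\star(B)$. Your candidate extremal element $(\L_0,b_{\L_0})$ lies in the fibre $\alpha^{-1}(\mathcal{S}_\ell^\star(B)_{\le(P,b_P)})$ only if $\z(\L_0)^F_\ell\le P$; writing $\H=\c_\G^\circ(P)$ one only knows $\z(\L_0)^F_\ell\le\z(\H)^F_\ell=\z(\c_{\G^F}(P))_\ell$, and the latter is contained in $P$ precisely when $P$ is almost-centric. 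In fact the fibre can be empty. Take $\G=\SL_3$, $\ell\mid q-1$, $\ell\nmid 3$ (so the hypotheses of the theorem hold) and $P=\langle{\rm diag}(\zeta,\zeta^{-1},1)\rangle$ of order $\ell$: for $\L$ a maximal torus one has $|\z(\L)^F_\ell|\ge\ell^2>|P|$, while for each maximal proper $1$-split Levi $\L$ the group $\z(\L)^F_\ell$ is generated by a semisimple element with a repeated eigenvalue, unlike the generator of $P$; hence no $\z(\L)^F_\ell$ is contained in $P$. Switching to the upper fibre $\alpha^{-1}(\mathcal{S}_\ell^\star(B)_{\ge(P,b_P)})$ does not help either, as that fibre is empty whenever $P$ is non-abelian.

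The paper resolves this by an intermediate reduction you do not mention: it first proves (Proposition~\ref{prop:Almost centric pairs and homotopy equivalence}) that the inclusion $\ab_\ell^\star(B)^{\rm ac}\hookrightarrow\mathcal{S}_\ell^\star(B)$ is already a $\G^F$-homotopy equivalence, the key observation being that for $(Q,b_Q)\in\ab_\ell^\star(B)$ the pair $(\z(\c_G(Q))_\ell,b_Q)$ is the canonical least almost-centric pair above it. Only then does the paper build a poset map $\phi:\ab_\ell^\star(B)^{\rm ac}\to\CL_e^\star(B)^{\rm op}$, in the direction \emph{opposite} to your $\alpha$, sending $(Q,b_Q)$ to $(\L_0,b_{\L_0})$ with $\L_0=\c_\G(\z^\circ(\c_\G^\circ(Q))_{\Phi_e})$; the fibres of $\phi$ are taken over points $(\L,b_\L)$ of $\CL_e^\star(B)$ and each one has the explicit minimum $(\z(\L)^F_\ell,b_\L)\in\ab_\ell^\star(B)^{\rm ac}$. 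Your $\alpha$ is essentially a one-sided inverse of $\phi$, and your fibre argument becomes correct once the target of $\alpha$ is restricted to $\ab_\ell^\star(B)^{\rm ac}$ --- but that almost-centric reduction is the missing idea, not a routine consequence of the hypotheses you list.
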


The paper is structured as follows. In Section \ref{sec:Preliminaries} we collect some preliminary results on Brauer pairs. In particular, we introduce the notion of almost-centric Brauer pair and show that these pairs control the homotopy type of the simplicial complex $\Delta(\mathcal{S}_\ell^\star(B))$. In Section \ref{sec:Defining characteristic} we consider the case $G=\G^F$ and $p=\ell$, and prove the connection between the Tits building of $\G^F$ and the simplicial complex $\Delta(\mathcal{S}_p^\star(B))$. Finally, in Section \ref{sec:Non-defining characteristic} we consider the case $p\neq \ell$ and prove Theorem \ref{thm:Main, Homotopy equivalence in non-defining characteristic}. We then conclude the paper by showing how \cite[Theorem A]{Ros-Homotopy} can be recovered from our Theorem \ref{thm:Main, Homotopy equivalence in non-defining characteristic}.

\section{Preliminary results on Brauer pairs}
\label{sec:Preliminaries}

Let $G$ be a finite group and fix a prime number $\ell$. A \textit{Brauer pair} of $G$, with respect to the prime $\ell$, is a pair $(Q,b_Q)$ where $Q$ is an $\ell$-subgroup of $G$ and $b_Q$ is a Brauer $\ell$-block of $\c_G(Q)$. Using the Brauer map $\Br_Q$ (see \cite[Theorem 5.4.1]{Lin18I}) we can define a partial order relation on the set of Brauer pairs. If $(P,b_P)$ and $(Q,b_Q)$ are Brauer pairs of $G$, then we write $(Q,b_Q)\leq (P,b_P)$ if $Q\leq P$ and there exists a primitive idempotent $i$ in the algbera $(kG)^P$ of $P$-fixed point, and where $k$ is a large enough field of characteristic $\ell$, such that $\Br_P(i)e_{b_P}\neq 0$ and $\Br_Q(i)e_{b_Q}\neq 0$ (see \cite[Definition 6.3.2]{Lin18II}). Here $e_{b_P}$ and $e_{b_Q}$ are the primitive central idempotents of $k\c_G(P)$ and $k\c_G(Q)$ corresponding to the Brauer $\ell$-blocks $b_P$ and $b_Q$ respectively. Given a Brauer pair $(Q,b_Q)$ there exists a unique Brauer $\ell$-block $B$ of $G$ such that $(1,B)\leq (Q,b_Q)$ in which case we say that $(Q,b_Q)$ is a \textit{$B$-Brauer pair}. Moreover, if $(Q,b_Q)\leq (P,b_P)$ then $(Q,b_Q)$ is a $B$-Brauer pair if and only if $(P,b_P)$ is a $B$-Brauer pair according to \cite[Proposition 6.3.6]{Lin18II}. We denote by $\mathcal{S}_\ell^\star(B)$ the poset of $B$-Brauer pairs $(Q,b_Q)$ satisfying $Q\neq 1$. Observe that the poset $\mathcal{S}_\ell^\star(B)$ is non-empty if and only if $B$ has non-trivial defect. In fact, maximal $B$-Brauer pairs are of the form $(D,b_D)$ where $D$ is a defect subgroup of $B$ (see \cite[Theorem 6.3.7]{Lin18II}). Moreover, since $B$ is stable under the action of $G$ by conjugation, notice that $\mathcal{S}_\ell^\star(B)$ is actually a $G$-poset. We denote by $\n_G(Q,b_Q)$ the stabiliser of the Brauer pair $(Q,b_Q)$ under the action of $G$, that is the stabiliser of the block $b_Q$ in the normaliser $\n_G(Q)$. For further details on Brauer pairs we refer the reader to the original paper of Alperin and Brou\'e \cite{Alp-Bro79} and to Linckelmann's monograph \cite[Section 6.3]{Lin18II}.

Next, recall that for every poset $\mathcal{X}$ we can form a simplicial complex $\Delta(\mathcal{X})$ whose simplices are given by totally ordered chains of finite length in $\mathcal{X}$. In particular, if $\mathcal{S}_\ell^\star(G)$ is the set of non-trivial $\ell$-subgroups of $G$, then $\Delta(\mathcal{S}_\ell^\star(G))$ is the \textit{Brown complex} introduced in \cite{Bro75} and further studied in \cite{Qui78}. In the next lemma we show that the Brown complex of $G$ is homeomorphic to the simplicial complex $\Delta(\mathcal{S}_\ell^\star(B_0(G)))$ where $B_0(G)$ is the principal block of $G$.

\begin{lem}
\label{lem:Principal block and Brown complex}
Let $\ell$ be a prime dividing the order of $G$ and consider the Brown complex $\Delta(\mathcal{S}_\ell^\star(G))$. Then, there is a homeomorphism of $\Delta(\mathcal{S}_\ell^\star(G))$ with $\Delta(\mathcal{S}_\ell^\star(B_0(G)))$ induced by the poset isomorphism
\begin{align*}
\mathcal{S}_\ell^\star(G)&\to \mathcal{S}_\ell^\star(B_0(G))
\\
Q &\mapsto (Q,B_0(\c_G(Q))).
\end{align*}
\end{lem}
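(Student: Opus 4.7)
The plan is to verify that the displayed map is a poset isomorphism, since the order-complex construction is functorial in order-preserving maps: a poset isomorphism induces a simplicial isomorphism of the associated abstract simplicial complexes, and hence a homeomorphism of their geometric realisations.

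The key input is Brauer's Third Main Theorem, which asserts that for every $\ell$-subgroup $Q$ of $G$, the principal block $B_0(\c_G(Q))$ of $\c_G(Q)$ is the unique block of $\c_G(Q)$ whose induced block to $G$ equals $B_0(G)$. Combined with the characterisation of $B$-Brauer pairs recalled earlier in this section (namely, $(Q,b_Q)$ is a $B$-Brauer pair if and only if $(1,B)\leq (Q,b_Q)$), this immediately gives that $(Q,B_0(\c_G(Q)))$ is a $B_0(G)$-Brauer pair for every non-trivial $\ell$-subgroup $Q$, so the map is well-defined. Surjectivity follows from the same theorem: for any $B_0(G)$-Brauer pair $(Q,b_Q)$, the block $b_Q$ must coincide with $B_0(\c_G(Q))$. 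Injectivity is immediate since the assignment recovers $Q$ as the first coordinate.

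For order preservation, I would combine the above with the existence and uniqueness property of included Brauer pairs (cf.\ the discussion of the order relation at the start of this section together with \cite[Theorem 6.3.4]{Lin18II}): given $Q\leq P$ in $\mathcal{S}_\ell^\star(G)$, there exists a unique Brauer pair of the form $(Q,b_Q)$ with $(Q,b_Q)\leq (P,B_0(\c_G(P)))$, and since $(P,B_0(\c_G(P)))$ is a $B_0(G)$-Brauer pair the included pair $(Q,b_Q)$ is also a $B_0(G)$-Brauer pair, forcing $b_Q=B_0(\c_G(Q))$ by the previous paragraph. Conversely, any inclusion of Brauer pairs immediately implies the inclusion of their first coordinates, so the inverse assignment $(Q,b_Q)\mapsto Q$ is also order-preserving.

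The argument is essentially formal once Brauer's Third Main Theorem and the existence-uniqueness of included Brauer pairs are in place; the only real obstacle is pointing to the forms of these statements that best fit the present setup, and confirming at the level of the Brauer map and the primitive central idempotent $e_{B_0(G)}$ that the covering relation used to define the order does single out the principal blocks in each centraliser.
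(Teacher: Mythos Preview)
Your proposal is correct and follows essentially the same approach as the paper: both rely on Brauer's Third Main Theorem to identify the $B_0(G)$-Brauer pairs as exactly those of the form $(Q,B_0(\c_G(Q)))$, and then observe that a poset isomorphism induces a homeomorphism of order complexes. The paper's proof is terser and leaves the order-preservation step implicit, whereas you spell it out via the uniqueness of included Brauer pairs; this is a welcome elaboration but not a different route.
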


\begin{proof}
Let $Q$ be an $\ell$-subgroup of $G$ and suppose that $b_Q$ is a block of $\c_G(Q)$ such that the Brauer pair $(Q,b_Q)$ belongs to $\mathcal{S}_\ell^\star(B_0(G))$. By Brauer's Third Main Theorem (see \cite[Theorem 6.3.14]{Lin18II}) the block $b_Q$ must coincide with the principal block $B_0(\c_G(Q))$ of $\c_G(Q)$ and we conclude that the assignment $Q\mapsto(Q,B_0(\c_G(Q)))$ defines an isomorphism of posets. The map of simplicial complexes obtained by extending this assignment to each simplex of $\Delta(\mathcal{S}_\ell^\star(G))$ is then a homeomorphism. 
\end{proof}

In the rest of this section, we reduce the study of the homotopy type of the simplicial complex $\Delta(\mathcal{S}_\ell^\star(B))$ to that of certain subcomplexes whose properties reflect the behaviour of the connected subpairs of Cabanes and Enguehard (see \cite[Definition-Proposition 2.1]{Cab-Eng99}). These results will be used in Section \ref{sec:Non-defining characteristic} to prove Theorem \ref{thm:Main, Homotopy equivalence in non-defining characteristic}. First, we define the set $\ab_\ell^\star(B)$ consisting of those Brauer pairs $(Q,b_Q)$ belonging to $\mathcal{S}_\ell^\star(B)$ and with $Q$ abelian. The following result extends a well known property of the Brown complex to the simplicial complex of Brauer pairs.

\begin{lem}
\label{lem:Abelian Brauer pairs and homotopy equivalence}
Let $B$ be an $\ell$-block of the finite group $G$. Then the inclusion of $G$-posets $\iota:\ab_\ell^\star(B)\to\mathcal{S}_\ell^\star(B)$ induces a $G$-homotopy equivalence $\Delta(\iota):\Delta(\ab_\ell^\star(B))\to \Delta(\mathcal{S}_\ell^\star(B))$.
\end{lem}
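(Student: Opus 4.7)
The plan is to apply the equivariant version of Quillen's Theorem A (the fiber/order-preserving map criterion of Thévenaz--Webb) to the $G$-equivariant poset map $\iota$. Concretely, I will show that for every $(Q,b_Q)\in\mathcal{S}_\ell^\star(B)$ the under-fiber
\[
\ab_\ell^\star(B)_{\leq(Q,b_Q)}=\{(P,b_P)\in\ab_\ell^\star(B)\mid (P,b_P)\leq(Q,b_Q)\}
\]
is $\n_G(Q,b_Q)$-contractible. Granting this for each $(Q,b_Q)$, the equivariant fiber theorem directly yields the asserted $G$-homotopy equivalence $\Delta(\iota)$.

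To identify the under-fiber, I will use the standard fact (see \cite[Theorem 6.3.7 and Proposition 6.3.6]{Lin18II}) that for every subgroup $P\leq Q$ there is a unique block $b_P$ of $\c_G(P)$ with $(P,b_P)\leq (Q,b_Q)$, and that such a pair is automatically a $B$-Brauer pair by transitivity of Brauer pair inclusion applied to $(1,B)\leq(Q,b_Q)$. Consequently the forgetful map $(P,b_P)\mapsto P$ gives a poset isomorphism between the under-fiber and the poset of non-trivial abelian subgroups of $Q$. This isomorphism is $\n_G(Q,b_Q)$-equivariant, because the $G$-action on Brauer pairs covers the conjugation action on underlying subgroups and uniqueness of the dominated block is preserved under conjugation by any element stabilising $(Q,b_Q)$.

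For the contractibility I will invoke the classical argument of Quillen. Since $Q$ is a non-trivial $\ell$-group its centre $\z(Q)$ is non-trivial; and for any non-trivial abelian $A\leq Q$ the subgroup $A\cdot\z(Q)$ is again abelian because $\z(Q)$ commutes with every element of $Q$. The zigzag of poset maps
\[
A\;\leq\;A\cdot\z(Q)\;\geq\;\z(Q)
\]
therefore provides a conical contraction of the poset of non-trivial abelian subgroups of $Q$ to the constant map at $\z(Q)$. Since $\z(Q)$ is characteristic in $Q$, each of the three maps is $\n_G(Q)$-equivariant, and in particular $\n_G(Q,b_Q)$-equivariant. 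Upon passing to nerves this descends to an $\n_G(Q,b_Q)$-equivariant contraction of $\Delta(\ab_\ell^\star(B)_{\leq(Q,b_Q)})$.

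The step I expect to handle with the most care is the first one: verifying existence and uniqueness of the dominated block $b_P$ for every subgroup $P\leq Q$, and in turn verifying that the identification of the under-fiber with the poset of non-trivial abelian subgroups of $Q$ is actually $\n_G(Q,b_Q)$-equivariant. Once these Brauer-pair bookkeeping points are settled, the remainder of the proof is a direct equivariant analogue of the Brown--Quillen homotopy equivalence between the poset of non-trivial $\ell$-subgroups and its abelian subposet.
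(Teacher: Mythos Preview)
Your proposal is correct and follows essentially the same route as the paper: apply the equivariant Quillen fibre criterion to $\iota$, and contract each under-fibre $\ab_\ell^\star(B)_{\leq(Q,b_Q)}$ via the $\n_G(Q,b_Q)$-invariant element $(\z(Q),c)$ and the join $(P\z(Q),c_P)$. The only cosmetic difference is that you first identify the fibre with the poset of non-trivial abelian subgroups of $Q$ (via uniqueness of dominated blocks, which is \cite[Theorem 6.3.3]{Lin18II} rather than 6.3.7) and then run Quillen's classical zigzag on subgroups, whereas the paper performs the same zigzag directly on Brauer pairs; the content is identical.
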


\begin{proof}
Without loss of generality we may assume that $B$ has non-trivial defect, for otherwise both $\mathcal{S}_\ell^\star(B)$ and $\ab_\ell^\star(B)$ are empty. Now, fix an element $(Q,b_Q)$ of $\mathcal{S}_\ell^\star(B)$ and denote by $\mathcal{X}$ the set of pairs $(P,b_P)$ of $\ab_\ell^\star(B)$ satisfying $(P,b_P)\leq (Q,b_Q)$. By Quillen's Theorem A \cite[Proposition 1.6]{Qui78} (see also the statement given in \cite[Lemma 1.1]{Ros-Homotopy}) it is enough to show that the simplicial complex $\Delta(\mathcal{X})$ is $\n_G(Q,b_Q)$-contractible. If $c$ is the unique block of $\c_G(\z(Q))$ such that $(\z(Q),c)\leq (Q,b_Q)$ (see \cite[Theorem 6.3.3]{Lin18II}), then $(\z(Q),c)$ is $\n_G(Q,b_Q)$-invariant and we claim that $\Delta(\mathcal{X})$ is $\n_G(Q,b_Q)$-join contractible via $(\z(Q),c)$ (see \cite[Section 1]{Ros-Homotopy}). Let $(P,b_p)\in\mathcal{X}$. Since $P$ is contained in $Q$, we deduce that $\z(Q)$ centralises $P$ and therefore $P\z(Q)$ is a well-defined abelian $\ell$-subgroup contained in $Q$. By using \cite[Theorem 6.3.3]{Lin18II} once again, we can now find a unique block $c_P$ of $\c_G(P\z(Q))$ such that $(P\z(Q),c_P)\leq (Q,b_Q)$. Furthermore, the uniqueness part of \cite[Theorem 6.3.3]{Lin18II} implies that $(P\z(Q),c_P)$ is the join of $(P,b_P)$ and $(\z(Q),c)$ and therefore $\Delta(\mathcal{X})$ is $\n_G(Q,b_Q)$-contractible according to \cite[Corollary 1.3]{Ros-Homotopy}.
\end{proof}

The above lemma still holds if we replace $\ab_\ell^\star(B)$ with the subposet of Brauer pairs $(Q,b_Q)$ such that $Q$ is an elementary abelian $\ell$-subgroup. We include this observation in the following remark.

\begin{rmk}
By replacing $\z(Q)$ with the subgroup $\Omega_1(\z(Q))$ of elements of order $\ell$ in $\z(Q)$, the above argument shows that $\Delta(\mathcal{S}_\ell^\star(B))$ is $G$-homotopy equivalent to the simplicial complex associated to the subposet of Brauer pairs $(Q,b_Q)$ with $Q$ elementary abelian. This fact, together with Lemma \ref{lem:Principal block and Brown complex}, can be used to recover Quillen's lemma \cite[Lemma 2.2]{Qui78} on the Brown complex.
\end{rmk}

Recall that a Brauer pair $(Q,b_Q)$ is called \textit{centric} if $b_Q$ has defect $\z(Q)$ in $\c_G(Q)$ (see, for instance, \cite[Definition 5.6]{Cab18}). In this case, it follows from \cite[Theorem 4.8]{Nav98} that $\z(Q)=\z(\c_G(Q))_\ell$ and where for any finite abelian group $H$ we define $H_\ell:=\O_\ell(H)$. On the other hand, observe that if $(Q,b_Q)$ is a Brauer pair satisfying $\z(Q)=\z(\c_G(Q))_\ell$ then the defect groups of $b_Q$ in $\c_G(Q)$ might still be larger than $\z(Q)$.

\begin{ex}
\label{ex:Almost centric non centric}
Let $G$ be the direct product of the cyclic group $C_2$ with the symmetric group $S_3$ and consider $\ell=2$. Denote by $Q=\z(G)$ the centre of $G$ which is cyclic of order $2$, and consider the principal block $B_0$ of $G=\c_G(Q)$. In this case, $\z(Q)=Q=\z(\c_G(Q))_\ell$ while the defect groups of $B_0$ have order $4$. Therefore $(Q,B_0)$ is not centric but satisfies the equality $\z(Q)=\z(\c_G(Q))_\ell$.
\end{ex}

The above discussion leads to the following definition of almost-centric Brauer pairs.

\begin{defin}
An $\ell$-subgroup $Q$ of a finite group $G$ is called \textit{almost-centric} if $\z(Q)=\z(\c_G(Q))_\ell$. Furthermore, we say that a Brauer pair $(Q,b_Q)$ is \textit{almost-centric} if the $\ell$-subgroup $Q$ is almost-centric. We denote by $\mathcal{S}_\ell^\star(B)^{\rm ac}$ the subset of $\mathcal{S}_\ell^\star(B)$ consisting of almost-centric Brauer pairs and by $\ab_\ell^\star(B)^{\rm ac}$ its intersection with $\ab_\ell^\star(B)$. Observe that the action of $G$ by conjugation on $\mathcal{S}_\ell^\star(B)$ restricts to the subsets $\mathcal{S}_\ell^\star(B)^{\rm ac}$ and $\ab_\ell^\star(B)^{\rm ac}$.
\end{defin}

It follows from the above definition that $\mathcal{S}_\ell^\star(B)^{\rm ac}$ and $\ab_\ell^\star(B)^{\rm ac}$ are $G$-subposets of $\mathcal{S}_\ell^\star(B)$. We can then refine the statement of Lemma \ref{lem:Abelian Brauer pairs and homotopy equivalence} and show that the homotopy type of the simplicial complex $\Delta(\mathcal{S}_\ell^\star(B))$ is determined by the abelian almost-centric Brauer pairs in $\ab_\ell^\star(B)^{\rm ac}$. Observe that the proof below also shows that $\ab_\ell^\star(B)^{\rm ac}$ is non-empty whenever $B$ has non-trivial defect group.

\begin{prop}
\label{prop:Almost centric pairs and homotopy equivalence}
Let $B$ be an $\ell$-block of the finite group $G$. Then the inclusion of posets $\iota:\ab_\ell^\star(B)^{\rm ac}\to \mathcal{S}_\ell^\star(B)$ induces a $G$-homotopy equivalence $\Delta(\iota):\Delta(\ab_\ell^\star(B)^{\rm ac})\to \Delta(\mathcal{S}_\ell^\star(B))$.
\end{prop}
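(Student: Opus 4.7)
My strategy is to factor $\iota$ through $\ab_\ell^\star(B)$: by Lemma~\ref{lem:Abelian Brauer pairs and homotopy equivalence} it is enough to show that the inclusion $\iota':\ab_\ell^\star(B)^{\rm ac}\hookrightarrow\ab_\ell^\star(B)$ is a $G$-homotopy equivalence, and I will achieve this by exhibiting an explicit $G$-equivariant homotopy inverse arising from a canonical ``closure'' operation on abelian $\ell$-subgroups.

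For $(P,b_P)\in\ab_\ell^\star(B)$ I define $P^{\rm cl}:=\z(\c_G(P))_\ell$. Since $P$ is abelian, $P\le\z(\c_G(P))$, hence $P\le P^{\rm cl}$ and $P^{\rm cl}$ is an abelian $\ell$-group. The containments $P\le P^{\rm cl}\le\z(\c_G(P))$ force $\c_G(P^{\rm cl})=\c_G(P)$, and a short centraliser calculation then yields $\z(\c_G(P^{\rm cl}))_\ell=\z(\c_G(P))_\ell=P^{\rm cl}$, so that $P^{\rm cl}$ is almost-centric. Identifying $b_P$ with the block of $\c_G(P^{\rm cl})=\c_G(P)$ it determines, I will argue that $(P,b_P)\le(P^{\rm cl},b_P)$ as Brauer pairs: because $P^{\rm cl}\le\z(\c_G(P))$ acts trivially on $\c_G(P)$, the Brauer morphism $\Br_{P^{\rm cl}}$ restricts to the identity on $k\c_G(P)$ and the block idempotent $e_{b_P}$ satisfies the primitive-idempotent criterion recalled in Section~\ref{sec:Preliminaries}; equivalently, the unique extension block above $(P,b_P)$ at $P^{\rm cl}$ provided by the standard normaliser-going-up uniqueness must coincide with $b_P$. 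By \cite[Proposition 6.3.6]{Lin18II} this places $(P^{\rm cl},b_P)$ in $\ab_\ell^\star(B)^{\rm ac}$.

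Next I verify that $f\colon(P,b_P)\mapsto(P^{\rm cl},b_P)$ is $G$-equivariant and order-preserving. $G$-equivariance is clear from the naturality of the construction. Given $(P_1,b_1)\le(P_2,b_2)$ in $\ab_\ell^\star(B)$, the subgroups $P_1\le P_2$ are abelian, so $P_2\le\c_G(P_1)$; since $P_1^{\rm cl}$ is central in $\c_G(P_1)$, it lies in $\c_G(P_2)$ and commutes with every element of $\c_G(P_2)\le\c_G(P_1)$, giving $P_1^{\rm cl}\le\z(\c_G(P_2))_\ell=P_2^{\rm cl}$. For the block compatibility, the chain $(P_1,b_1)\le(P_2,b_2)\le(P_2^{\rm cl},b_2)$ combined with \cite[Theorem 6.3.3]{Lin18II} applied to the intermediate subgroup $P_1^{\rm cl}$ singles out a unique intermediate Brauer pair; by the previous paragraph this must be $(P_1^{\rm cl},b_1)$, so $(P_1^{\rm cl},b_1)\le(P_2^{\rm cl},b_2)$.

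To conclude, $f\circ\iota'=\mathrm{id}$ on $\ab_\ell^\star(B)^{\rm ac}$ because almost-centric pairs satisfy $P^{\rm cl}=P$, while $\iota'\circ f\ge\mathrm{id}$ pointwise on $\ab_\ell^\star(B)$. The standard fact that two $G$-equivariant order-preserving maps of $G$-posets which are pointwise comparable induce $G$-homotopic maps on simplicial realisations then forces both compositions to be $G$-homotopic to the identity, so that $\Delta(\iota')$ is a $G$-homotopy equivalence. Composing with Lemma~\ref{lem:Abelian Brauer pairs and homotopy equivalence} delivers the required equivalence. The main obstacle is the block-theoretic step establishing $(P,b_P)\le(P^{\rm cl},b_P)$ and its order-preserving refinement, where the characterisation of Brauer pair inclusion via the Brauer morphism must be combined carefully with Linckelmann's uniqueness result for intermediate Brauer pairs.
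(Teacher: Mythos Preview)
Your proof is correct and rests on the very same construction as the paper's: the ``closure'' $(P,b_P)\mapsto(\z(\c_G(P))_\ell,b_P)$, together with the observation that $\c_G(P)=\c_G(\z(\c_G(P))_\ell)$ so that $b_P$ is literally the same block on both sides and the Brauer-pair inclusion $(P,b_P)\leq(\z(\c_G(P))_\ell,b_P)$ is immediate. The paper also reduces to $\ab_\ell^\star(B)^{\rm ac}\hookrightarrow\ab_\ell^\star(B)$ via Lemma~\ref{lem:Abelian Brauer pairs and homotopy equivalence} and also singles out $(\z(\c_G(Q))_\ell,b_Q)$ as the crucial element; the difference is only in packaging. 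The paper invokes Quillen's Theorem~A, checking that for each $(Q,b_Q)\in\ab_\ell^\star(B)$ the upward fibre of almost-centric pairs has $(\z(\c_G(Q))_\ell,b_Q)$ as an $\n_G(Q,b_Q)$-invariant minimum and is therefore equivariantly contractible. You instead promote this fibrewise minimum to a global $G$-equivariant poset retraction $f$ with $\iota'\circ f\geq\mathrm{id}$, and conclude via the comparability criterion for $G$-homotopies. Your route is slightly more elementary in that it bypasses Quillen's Theorem~A and yields an explicit homotopy inverse; the paper's formulation, on the other hand, makes the stabiliser-equivariance of each fibre transparent and fits the pattern used elsewhere in the paper (e.g.\ in Lemma~\ref{lem:Contractibility of fibers}). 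Substantively the two arguments are equivalent: the existence of a functorial minimum in each upward fibre is precisely what makes your retraction order-preserving, and conversely your $f(Q,b_Q)$ is the minimum the paper exhibits.
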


\begin{proof}
Without loss of generality we may assume that $B$ has non-trivial defect. Moreover, using Lemma \ref{lem:Abelian Brauer pairs and homotopy equivalence}, it suffices to show that the inclusion $\iota:\ab_\ell^\star(B)^{\rm ac}\to \ab_\ell^\star(B)$ induces a $G$-homotopy equivalence $\Delta(\iota):\Delta(\ab_\ell^\star(B)^{\rm ac})\to \Delta(\ab_\ell^\star(B))$. By Quillen's Theorem A \cite[Proposition 1.6]{Qui78} (we actually use the stronger form stated in \cite[Lemma 1.1 (ii)]{Ros-Homotopy}) it is enough to show that, given a Brauer pair $(Q,b_Q)\in\ab_\ell^\star(B)$, the simplicial complex $\Delta(\mathcal{X})$ is $\n_G(Q,b_Q)$-contractible where $\mathcal{X}$ denotes the poset of pairs $(P,b_P)\in\ab_\ell^\star(B)^{\rm ac}$ such that $(Q,b_Q)\leq (P,b_P)$. We claim that the pair $(\z(\c_G(Q))_\ell,b_Q)$ is an $\n_G(Q,b_Q)$-invariant minimum in the poset $\mathcal{X}$ from which we conclude that $\Delta(\mathcal{X})$ is $\n_G(Q,b_Q)$-contractible thanks to \cite[Corollary 1.3]{Ros-Homotopy}. First, notice that because $Q$ is abelian it is contained in $\z(\c_G(Q))_\ell$ and therefore that $\c_G(Q)=\c_\G(\z(\c_G(Q))_\ell)$ by elementary group theory. In particular, it follows that the Brauer pair $(\z(\c_G(Q))_\ell,b_Q)$ is well-defined and belongs to $\ab_\ell^\star(B)^{\rm ac}$. In addition, noticing that $\z(\c_\G(Q))_\ell$ is a characteristic subgroup of $\c_G(Q)$ and that $\c_G(Q)$ is normalised by $\n_G(Q)$, we have that $(\z(\c_G(Q))_\ell,b_Q)$ is invariant under the action of $\n_G(Q,b_Q)$. This shows that $(\z(\c_G(Q))_\ell,b_Q)$ is an $\n_G(Q,b_Q)$-invariant element of $\mathcal{X}$. Suppose now that $(P,b_P)$ is an almost-centric Brauer pair in $\mathcal{X}$. Since $Q\leq P$ and $P$ is abelian, we deduce that $P\leq \c_G(P)\leq \c_G(Q)$ and hence that $\z(\c_G(Q))\leq \c_G(P)$. Then $\z(\c_G(Q))_\ell\leq \z(\c_G(P))_\ell=\z(P)=P$ because $P$ is almost-centric and thus $(\z(\c_G(Q))_\ell,b_Q)\leq (P,b_P)$ by applying \cite[Theorem 6.3.3]{Lin18II} to the inclusions $Q\leq \z(\c_G(Q))_\ell\leq P$ and recalling that $(Q,b_Q)\leq (P,b_P)$. Therefore $(\z(\c_G(Q))_\ell,b_Q)$ is a minimum in the poset $\mathcal{X}$ as claimed previously and this completes the proof.
\end{proof}

We conclude this section with a remark on centric Brauer pairs. It follows from the proof of Proposition \ref{prop:Almost centric pairs and homotopy equivalence} that for every intermediate poset $\ab_\ell^\star(B)^{\rm ac}\subseteq\mathcal{P}\subseteq \mathcal{S}_\ell^\star(B)$ the inclusion $\iota:\mathcal{P}\to \mathcal{S}_\ell^\star(B)$ induces a homotopy equivalence of $\Delta(\mathcal{S}_\ell^\star(B))$ with $\Delta(\mathcal{P})$. In particular, $\mathcal{S}_\ell^\star(B)^{\rm ac}$ and $\mathcal{S}_\ell^\star(B)$ induce homotopy equivalent simplicial complexes. It is interesting to point out that, however, this result fails if we replace almost-centric Brauer pairs with centric Brauer pairs. The following example is due to Gelvin and M{\o}ller (see {\cite[Example 6.2]{Gel-Mol15}).

\begin{ex}
Consider $G=C_2\times S_3$, $\ell=2$ and $B_0$ the principal $2$-block of $G$ as in Example \ref{ex:Almost centric non centric}. Since $\O_2(G)\neq 1$, it follows from \cite[Proposition 2.1 and Proposition 2.4]{Qui78} and Lemma \ref{lem:Principal block and Brown complex} that $\Delta(\mathcal{S}_\ell^\star(B_0))$ is contractible. On the other hand the poset of centric $B_0$-Brauer pairs induces a discrete simplicial complex consisting of three zero dimensional simplices corresponding to the three Sylow $2$-subgroups of $G$.
\end{ex}

\section{The defining characteristic case $\ell=p$}
\label{sec:Defining characteristic}

Let $\G$ be a connected reductive group defined over an algebraically closed field $\mathbb{F}$ of prime characteristic $p$ and consider a Frobenius endomorphism $F:\G\to \G$ corresponding to an $\mathbb{F}_q$-rational structure on the algebraic variety $\G$ for a power $q$ of $p$. From now on we restrict our attention to the case where the finite group $G$ coincides with the finite reductive group $\G^F$ consisting of the $\mathbb{F}_q$-rational points in $\G$. In this section we assume that the defining characteristic $p$ of $\G^F$ coincides with the prime $\ell$ with respect to which Brauer blocks are defined. The non-defining characteristic case, i.e. the case $\ell\neq p$, will be considered in Section \ref{sec:Non-defining characteristic}. We start by recalling the following well-known identity.

\begin{lem}
\label{lem:Centraliser of unipotent radical}
Let $\B$ be an $F$-stable Borel subgroup of $\G$ with unipotent radical $\u$. Then $\c_{\G^F}(\u^F)=\z(\G^F)\z(\u^F)$.
\end{lem}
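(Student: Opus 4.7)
The inclusion $\z(\G^F)\z(\u^F) \subseteq \c_{\G^F}(\u^F)$ is immediate, and my plan for the reverse inclusion is to trap $\c_{\G^F}(\u^F)$ inside $\B^F$ and then to exploit the semidirect product decomposition $\B = \T \ltimes \u$ (for an $F$-stable maximal torus $\T \subseteq \B$) together with the action of $\T$ on the root subgroups of $\u$.

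For the trapping step, I would use the BN-pair structure of $\G^F$: since $\u^F$ is the unipotent radical of the Borel subgroup $\B^F$, one knows that $\n_{\G^F}(\u^F) = \B^F$, for example via the Bruhat decomposition---any nontrivial Weyl element $\dot{w}$ fails to normalise $\u^F$, because $\dot{w}\u\dot{w}^{-1}$ necessarily contains a negative root subgroup $\u_{-\beta}$ (taking $\beta > 0$ with $w^{-1}(\beta) < 0$), whose $F$-fixed points lie outside $\u^F$. A fortiori, $\c_{\G^F}(\u^F) \subseteq \B^F$.

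Having placed $g \in \c_{\G^F}(\u^F) \subseteq \B^F$, I would write $g = tu'$ with $t \in \T^F$ and $u' \in \u^F$. The condition $[g,v] = 1$ for all $v \in \u^F$, combined with the Chevalley commutator relations which show that $[\u,\u]$ is generated by the root subgroups attached to non-simple positive roots, forces $t$ to act trivially on $\u^F/[\u^F,\u^F]$. Since $\T$ acts on each (Frobenius orbit of) simple root subgroup through the corresponding simple root, this gives $\alpha(t) = 1$ for every simple root $\alpha$, hence for every root, so that $t \in \z(\G) \cap \G^F = \z(\G)^F \subseteq \z(\G^F)$. The central factor $t$ then drops out of the commutation, yielding $u' = t^{-1}g \in \c_{\u^F}(\u^F) = \z(\u^F)$, whence $g \in \z(\G^F)\z(\u^F)$.

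The main obstacle I anticipate is the non-split setting: there one must work with $F$-orbits of simple roots rather than individual simple roots, and verify that each orbit contributes a non-zero summand of $\u^F/[\u^F,\u^F]$ on which the action of $t$ forces the corresponding product of simple-root characters to be trivial, thus genuinely implying $\alpha(t)=1$ for each constituent $\alpha$. This is a routine but somewhat technical application of the standard description of $\u^F$ in finite groups of Lie type, and once it is in place the argument runs uniformly across all finite reductive groups.
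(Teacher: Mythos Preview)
Your argument is correct, but it follows a genuinely different route from the paper's. The paper never enters the Bruhat decomposition or the root-subgroup calculus at all; instead it picks a single \emph{regular unipotent} element $u\in\u^F$ (whose existence over $\mathbb{F}_q$ is a standard fact, cited from Digne--Michel), invokes the known centraliser formula $\c_{\G^F}(u)=\z(\G^F)\c_{\u^F}(u)$, and then simply intersects both sides with $\c_{\G^F}(\u^F)$ using Dedekind's modular law to obtain $\z(\G^F)\z(\u^F)$ in one line. The virtue of that approach is uniformity: no distinction between split and twisted types is ever made, so the very technicality you flag as the main obstacle---tracking $F$-orbits of simple roots and checking each contributes a nonzero summand to $\u^F/[\u^F,\u^F]$---evaporates. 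Your route, by contrast, is more elementary in spirit, relying only on the BN-pair and Chevalley commutator structure rather than on the theory of regular elements; it is the kind of argument one could run from first principles, at the cost of the orbit bookkeeping you anticipate. Both are valid; the paper's is shorter and cleaner for this particular lemma.
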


\begin{proof}
By \cite[Corollary 12.2.4 and Proposition 12.2.14]{Dig-Mic20} there exists an $F$-stable regular unipotent element $u$ of $\G^F$. Recall that $u$ is a $p$-element of $\G^F$ and that $\u^F$ is a Sylow $p$-subgroup of $\G^F$ according to \cite[Proposition 1.1.5 and Proposition 4.4.1]{Dig-Mic20}. We can therefore assume that $u$ belongs to $\u^F$. Now, \cite[Lemma 12.2.3]{Dig-Mic20} implies that $\c_{\G^F}(u)=\z(\G^F)\c_{\u^F}(u)$ and we deduce that
\begin{align*}
\c_{\G^F}(\u^F)&=\c_{\G^F}(u)\cap \c_{\G^F}(\u^F)
\\
&=\z(\G^F)\c_{\u^F}(u)\cap \c_{\G^F}(\u^F)
\\
&=\z(\G^F)\left(\c_{\u^F}(u)\cap \c_{\G^F}(\u^F)\right)
\\
&=\z(\G^F)\z(\u^F)
\end{align*}
where we used Dedekind's modular law. This completes the proof.
\end{proof}

Let $\mathcal{P}(\G,F)$ be the poset consisting of $F$-stable parabolic subgroups of $\G$ ordered by inclusion. In this paper we define the \textit{Tits building} $\mathcal{B}(\G,F)$ of the finite reductive group $(\G,F)$ to be the associated simplicial complex $\Delta(\mathcal{P}(\G,F)^{\rm op})$. Here, for every given poset $\mathcal{X}$, we denote by $\mathcal{X}^{\rm op}$ the opposite poset given by reverse inclusions. What we just defined is, more precisely, the barycentric subdivision of the Tits building which is homeomorphic to it. We refer the reader to \cite[Section 6.8]{Ben98} for further details. The Tits building was shown to be homotopy equivalent to the Brown complex $\Delta(\mathcal{S}_p^\star(\G^F))$ by Quillen in \cite[Proposition 2.1 and Theorem 3.1]{Qui78}. The aim of this section is to extend this result to the blockwise set-up considered in this paper and prove that the simplicial complex $\Delta(\mathcal{S}_p^\star(B))$ for a $p$-block $B$ of $\G^F$ with non-trivial defect is $\G^F$-homotopy equivalent to the Tits building $\mathcal{B}(\G,F)$. This is the content of the following theorem. We remark that the following statement is an extension of the homotopy equivalence for the Brown complex $\Delta(\mathcal{S}_p^\star(\G^F))$ thanks to Lemma \ref{lem:Principal block and Brown complex}.

\begin{theo}
\label{thm:Equivalence for defining characteristic}
Let $B$ be a $p$-block of $\G^F$ with non-trivial defect and where $p$ is the defining characteristic of $\G^F$. If $\G$ is simple and $\G^F$ is perfect, then the simplicial complex $\Delta(\mathcal{S}^\star_p(B))$ is $\G^F$-homotopy equivalent to the Tits building $\mathcal{B}(\G,F)$.
\end{theo}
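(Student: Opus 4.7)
The strategy is to imitate Quillen's proof of $\Delta(\mathcal{S}_p^\star(\G^F))\simeq\mathcal{B}(\G,F)$ from \cite{Qui78} in the block-theoretic setting, using the reduction to abelian almost-centric Brauer pairs established in Section \ref{sec:Preliminaries} together with the specific features of defining-characteristic block theory of $\G^F$.

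The first step is to invoke Proposition \ref{prop:Almost centric pairs and homotopy equivalence} and reduce the problem to producing a $\G^F$-homotopy equivalence $\Delta(\ab_p^\star(B)^{\rm ac})\simeq \mathcal{B}(\G,F)$. The key block-theoretic ingredient is the classical result of Humphreys which, under the hypotheses that $\G$ is simple and $\G^F$ perfect, guarantees that every $p$-block of $\G^F$ with non-trivial defect has defect group a full Sylow $p$-subgroup $\u^F$ of $\G^F$ (where $\B=\T\u$ is an $F$-stable Borel) and that such blocks are distinguished by their central characters on $\z(\G^F)$. Combined with Lemma \ref{lem:Centraliser of unipotent radical}, this will let us fix a maximal $B$-Brauer pair $(D,b_D)=(\u^F,b_{\u^F})$ and, for each $F$-stable proper parabolic $\P$ containing $\B$, produce an $\n_{\G^F}(\P)$-invariant abelian almost-centric $p$-subgroup $Z_\P\leq \u_\P^F$ (essentially $\z(\u_\P^F)$, possibly adjusted by the central contribution identified in Lemma \ref{lem:Centraliser of unipotent radical}) together with a unique block $b_{Z_\P}$ of $\c_{\G^F}(Z_\P)$ such that $(Z_\P,b_{Z_\P})\leq (D,b_D)$, by the uniqueness statement of \cite[Theorem 6.3.3]{Lin18II}.

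I would then apply Quillen's Theorem A in the equivariant form of \cite[Lemma 1.1]{Ros-Homotopy} to the $\G^F$-poset map
\begin{equation*}
f\colon \ab_p^\star(B)^{\rm ac}\longrightarrow \mathcal{P}(\G,F)^{\rm op},\qquad (Q,b_Q)\longmapsto \P_Q,
\end{equation*}
where $\P_Q$ is the canonical $F$-stable parabolic associated to $Q$ via the Borel--Tits theorem. For each $F$-stable proper parabolic $\P$, the relevant fiber is the subposet of abelian almost-centric $B$-Brauer pairs $(Q,b_Q)$ with $Q\leq \u_\P^F$. Following the join argument used in the proof of Lemma \ref{lem:Abelian Brauer pairs and homotopy equivalence}, the assignment $(Q,b_Q)\mapsto (QZ_\P,b_{QZ_\P})$ should realise this fiber as $\n_{\G^F}(\P)$-join-contractible via the invariant minimum $(Z_\P,b_{Z_\P})$, so that \cite[Corollary 1.3]{Ros-Homotopy} gives $\n_{\G^F}(\P)$-contractibility. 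Quillen's Theorem A then produces the desired $\G^F$-homotopy equivalence.

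The main obstacle is the block-compatibility threaded through the whole construction: one must verify that $\P_Q$ depends only on the $p$-subgroup $Q$, that the join element $(QZ_\P,b_{QZ_\P})$ is indeed a $B$-Brauer pair belonging to the fiber, and, most delicately, that $(Z_\P,b_{Z_\P})\leq (Q,b_Q)$ as $B$-Brauer pairs for every $(Q,b_Q)$ in the fiber. Each of these reduces to the same principle: in the defining characteristic, $p$-blocks of full defect of $\G^F$ and of the centralisers appearing in the fiber analysis are separated by their central characters, which is precisely the content of Humphreys's theorem under our assumptions on $\G$ and $\G^F$.
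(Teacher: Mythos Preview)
Your route is genuinely different from the paper's and more elaborate than necessary. The paper does not pass through Proposition~\ref{prop:Almost centric pairs and homotopy equivalence} or build any map to parabolics. Instead, using Humphreys's description of full-defect $p$-blocks via characters $\zeta\in\irr(\z(\G^F))$ together with Lemma~\ref{lem:Centraliser of unipotent radical}, it argues that for every non-trivial $p$-subgroup $Q$ there is a \emph{unique} block $B_Q$ of $\c_{\G^F}(Q)$ with $(1,B)\leq(Q,B_Q)$; the forgetful map $(Q,b_Q)\mapsto Q$ is then a $\G^F$-equivariant isomorphism of posets $\mathcal{S}_p^\star(B)\cong\mathcal{S}_p^\star(\G^F)$, and one simply invokes the classical $\G^F$-homotopy equivalence $\Delta(\mathcal{S}_p^\star(\G^F))\simeq\mathcal{B}(\G,F)$. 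This uniqueness is precisely the principle you isolate in your last paragraph (``blocks separated by their central characters''), but once it is in hand the block decoration on Brauer pairs is redundant and the fibre machinery becomes superfluous.

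Your sketch also carries two concrete gaps. First, for the contraction to stay inside $\ab_p^\star(B)^{\rm ac}$ you need $QZ_\P$ to be almost-centric whenever $Q$ is; this is neither asserted nor obvious, and the join argument of Lemma~\ref{lem:Abelian Brauer pairs and homotopy equivalence} you invoke lives in $\ab_\ell^\star(B)$, not in its almost-centric subposet. Second, the inequality $(Z_\P,b_{Z_\P})\leq(Q,b_Q)$ that you flag as ``most delicate'' is not what join-contractibility requires and is in general false (since $Z_\P\not\leq Q$); what must actually be checked is that the block above $QZ_\P$ determined by $(Q,b_Q)$ agrees with the one determined by $(Z_\P,b_{Z_\P})$, so that a common upper bound exists in the fibre. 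That compatibility does hold, but its verification again comes down to the uniqueness of the $B$-Brauer pair over each $p$-subgroup, at which point the paper's direct shortcut is already available.
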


\begin{proof}
By \cite[Theorem 3.1 and Proposition 2.1]{Qui78} we deduce that the Brown complex $\Delta(\mathcal{S}_p^\star(\G^F))$ is homotopy equivalent to the Tits building $\mathcal{B}(\G,F)$. To see that this is actually a $\G^F$-homotopy equivalence, notice that the poset of proper parabolic subgroups underlying the Tits building is isomorphic (via a $\G^F$-equivariant map) to the opposite of the poset of proper $p$-radical subgroups of $\G^F$ while the latter induces a simplicial complex that is $\G^F$-homotopy equivalent to the Brown complex $\Delta(\mathcal{S}_p^\star(\G^F))$ thanks to \cite[Theorem 2]{The-Web91}. Therefore, it suffices to show that $\Delta(\mathcal{S}_p^\star(B))$ is $\G^F$-homotopy equivalent to the Brown complex. Observe that this claim follows already from Lemma \ref{lem:Principal block and Brown complex} in the case that $B$ is the principal block. Suppose now that the block $B$ is not principal.

Since $B$ has non-trivial defect by assumption, \cite{Hum71} implies that $B$ has defect group $U=\u^F$ a Sylow $p$-subgroup of $\G^F$. Moreover, following the proof of \cite[Theorem 6.18]{Cab-Eng04} we can find an irreducible character $1\neq\zeta$ of $\z(\G^F)$ that parametrises the $p$-block $B$. More precisely, by Lemma \ref{lem:Centraliser of unipotent radical} we have $\c_{\G^F}(U)=\z(\G^F)\z(U)$ and therefore $\epsilon_\zeta:=|\z(\G^F)|^{-1}\sum_{z\in\z(\G^F)}\zeta(z)z^{-1}$ determines a primitive idempotent of the group algebra $\mathbb{F}\c_{\G^F}(U)$ where $\mathbb{F}$ is the algebraically closed field of characteristic $p$ over which $\G$ is defined. Here, notice that the order $\z(\G^F)$ is invertible inside $\mathbb{F}$ since it consists of semisimple elements and applying \cite[Proposition 1.1.5]{Dig-Mic20}. Moreover, if we denote by $B_U$ the corresponding $p$-block of $\c_{\G^F}(U)$, then we have the inclusion of Brauer pairs $(1,B)\leq (U,B_U)$ and therefore $(U,B_U)$ belongs to $\mathcal{S}_p^\star(B)$. Now, \cite[Theorem 6.3.3]{Lin18II} shows that for every non-trivial $p$-subgroup $Q$ of $\G^F$ contained in $U$ there exists a unique $p$-block $B_Q$ of $\c_{\G^F}(Q)$ such that $(Q,B_Q)\leq (U,B_U)$ and, by applying \cite[Proposition 6.3.6]{Lin18II}, it follows that $(Q,B_Q)$ belongs to $\mathcal{S}^\star_p(B)$. We can therefore define a map of posets from $\mathcal{S}_p^\star(\G^F)$ to $\mathcal{S}_p^\star(B)$ by sending the $p$-subgroup $Q$ to the $B$-Brauer pair $(Q,B_Q)$. Observe that each Brauer pair $(Q,B_Q)$ is uniquely determined by $Q$ and $B$ and thus the map defined above is an isomorphism of posets. Furthermore, since $\G^F$ fixes $B$ and $\zeta$, we conclude that this map is $\G^F$-equivariant. We can now conclude that the induced map of simplicial complexes is a $\G^F$-homotopy equivalence between $\Delta(\mathcal{S}_p^\star(\G^F))$ and $\Delta(\mathcal{S}_p^\star(B))$ as claimed above. This concludes the proof according to the previous paragraph.
\end{proof}

\section{The non-defining characteristic case $\ell\neq p$}
\label{sec:Non-defining characteristic}

We keep $\G$, $F$, $p$ and $q$ as in Section \ref{sec:Defining characteristic} and assume now that $p\neq \ell$. We define $e_\ell(q)$ to be the multiplicative order of $q$ modulo $\ell$. The aim of this section is to prove Theorem \ref{thm:Main, Homotopy equivalence in non-defining characteristic} and obtain a description of the homotopy type of the simplicial complex $\Delta(\mathcal{S}_\ell^\star(B))$ for each $\ell$-block $B$ of $\G^F$ in terms of $e_\ell(q)$-Harish-Chandra theory. This result also extends \cite[Theorem A]{Ros-Homotopy} according to Lemma \ref{lem:Principal block and Brown complex}.

To start, we recall the definition of the set $\pi(\G,F)$ from \cite[Definition 2.1]{Ros-Homotopy}. Let $\G_{\rm sc}:=([\G,\G])_{\rm sc}$ be the group introduced in \cite[Example 1.5.3 (b)]{Gec-Mal20} and consider a pair $(\G^*,F^*)$ in duality with $(\G,F)$ as defined in \cite[Definition 1.5.17]{Gec-Mal20}. We refer the reader to \cite[2.7.14]{Gec-Mal20} for the definition of good primes.

\begin{defin}
\label{def:Prime condition}
Let $\pi'(\G,F)$ be the set of primes $\ell$ that are good for $\G$, do not divide $2$, $q$ or $|\z(\G_{\rm sc})^F|$, and satisfy $\ell\neq 3$ whenever $(\G,F)$ has a rational component of type ${^3{{\bf D}}_4}$. Then, we define $\pi(\G,F)$ to be the set of primes $\ell\in\pi'(\G,F)$ not diving $|\z(\G)^F:\z^\circ(\G)^F|$ nor $|\z(\G^*)^F:\z^\circ(\G^*)^F|$.
\end{defin}

Observe that if $\L$ is an $F$-stable Levi subgroup of $\G$ and $\ell$ is a prime in the set $\pi(\G,F)$, then it is not true that $\ell$ belongs to $\pi(\L,F)$. In fact, it might happen that $\ell$ divides the order of $\z(\L_{\rm sc})^F$. Nevertheless, in this case the prime $\ell$ is good for $\L$ and does not divide $|\z(\L)^F:\z^\circ(\L)^F|$ nor $|\z(\L^*)^F:\z^\circ(\L^*)^F|$. This observation will be used without further reference.

We will make use of the theory of $\Phi_e$-tori and $e$-split Levi subgroups as introduced in \cite{Bro-Mal92} where $e$ is a positive integer. In particular, for an $F$-stable torus $\T$, we denote by $\T_{\Phi_e}$ its Sylow $\Phi_e$-torus which is well-defined thanks to \cite[Theorem 3.4]{Bro-Mal92}. Then, we say that an $F$-stable Levi subgroup $\L$ of $\G$ is \textit{$e$-split} if it satisfies $\L=\c_\G(\z^\circ(\L)_{\Phi_e})$. In the next lemma we collect some well-known results on centralisers of abelian $\ell$-subgroups and their connection to $e$-split Levi subgroups.

\begin{lem}
\label{lem:Centralisers of abalian ell-subgroups}
Let $Q$ be an abelian $\ell$-subgroup of $\G^F$ and assume that $\ell$ is good for $\G$. Then:
\begin{enumerate}
\item $\H:=\c_\G^\circ(Q)$ is an $F$-stable Levi subgroup of $(\G,F)$;
\item $\L:=\c_\G(\z^\circ(\H)_{\Phi_e})$ is an $e$-split Levi subgroup of $(\G,F)$ and $\H\leq \L$;
\item if  $\ell$ does not divide $|\z(\G)^F:\z^\circ(\G)^F|$, then $\L=\c^\circ_\G(\z(\L)^F_\ell)$;
\item if $\ell$ does not divide $|\z(\G^*)^F:\z^\circ(\G^*)^F|$, then $\c_\G^\circ(Q)^F=\c_{\G^F}(Q)$;
\item if $\ell\in\pi(\G,F)$ and $e=e_\ell(q)$, then $\L=\G$ if and only if $Q\leq \z(\G)^F_\ell$; 
\end{enumerate}
\end{lem}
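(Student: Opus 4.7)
The plan is to verify the five claims in order, drawing on standard structural properties of centralisers of semisimple $\ell$-elements in connected reductive groups.

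For (i), since $\ell\neq p$ the $\ell$-subgroup $Q$ consists of semisimple elements of $\G^F$. Under the assumption that $\ell$ is good for $\G$, the connected centraliser of an abelian semisimple subgroup of $\G$ is a Levi subgroup (see, e.g., \cite{Cab-Eng04,Gec-Mal20}), so $\H=\c_\G^\circ(Q)$ is an $F$-stable Levi subgroup of $(\G,F)$. Item (ii) follows directly: as $\H$ is a Levi subgroup, $\z^\circ(\H)$ is an $F$-stable torus with well-defined Sylow $\Phi_e$-torus $\S:=\z^\circ(\H)_{\Phi_e}$ by \cite[Theorem~3.4]{Bro-Mal92}, and by the characterisation of $e$-split Levi subgroups as centralisers of $\Phi_e$-tori from \cite{Bro-Mal92}, the subgroup $\L=\c_\G(\S)$ is $e$-split. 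The inclusion $\H\leq\L$ holds because $\H$ centralises $\z^\circ(\H)$ and hence its Sylow $\Phi_e$-torus.

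For (iii), write $\S=\z^\circ(\L)_{\Phi_e}$ so that $\L=\c_\G(\S)$ and $\S^F_\ell\leq\z(\L)^F_\ell$. The hypothesis $\ell\nmid|\z(\G)^F:\z^\circ(\G)^F|$, via a root-theoretic argument, ensures that any element of $\G$ centralising the finite $\ell$-subgroup $\S^F_\ell$ in fact centralises all of $\S$, so $\c_\G(\S^F_\ell)=\c_\G(\S)=\L$; combined with the inclusions $\L\leq\c_\G^\circ(\z(\L)^F_\ell)\leq\c_\G^\circ(\S^F_\ell)=\L$, this yields the claim. For (iv), the condition $\ell\nmid|\z(\G^*)^F:\z^\circ(\G^*)^F|$ is precisely what is needed to guarantee via duality that centralisers in $\G$ of abelian $\ell$-subgroups of $\G^F$ are connected (compare with the theory developed in \cite{Cab-Eng99,Cab-Eng04}), so $\c_\G^\circ(Q)^F=\c_\G(Q)^F=\c_{\G^F}(Q)$.

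Part (v) is where the main obstacle lies. The forward direction is immediate: if $Q\leq\z(\G)^F_\ell\leq\z(\G)$ then $\c_\G(Q)=\G$, so $\H=\G$ and $\z^\circ(\G)_{\Phi_e}$ is centralised by all of $\G$, yielding $\L=\G$. For the converse, assume $\L=\G$. By (iv) we have $Q\leq\c_{\G^F}(Q)=\H^F$, and since $\H$ centralises $Q$ by construction, $Q$ lies in $\z(\H)^F_\ell$. The hypothesis $\ell\in\pi(\G,F)$ combined with the observation following Definition~\ref{def:Prime condition} gives $\ell\nmid|\z(\H)^F:\z^\circ(\H)^F|$, hence $Q\leq\z^\circ(\H)^F_\ell$. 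The crucial and hardest step is then to identify this $\ell$-part with the $\ell$-part of the Sylow $\Phi_e$-torus, that is $\z^\circ(\H)^F_\ell=(\z^\circ(\H)_{\Phi_e})^F_\ell=\S^F_\ell$; this relies on the assumption $e=e_\ell(q)$ together with the exclusion of small and ramified primes encoded in $\pi(\G,F)$, so that the cyclotomic factorisation of $|\z^\circ(\H)^F|$ concentrates its $\ell$-part entirely on the $\Phi_e$-factor. Once this identification is established, the hypothesis $\L=\G$ translates to $\S\leq\z(\G)$; since $\S$ is a connected subgroup of $\z(\G)$ it must be contained in $\z^\circ(\G)$, and we conclude $Q\leq\S^F_\ell\leq\z^\circ(\G)^F_\ell\leq\z(\G)^F_\ell$, as required.
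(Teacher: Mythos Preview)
Your proposal is correct and aligns with the paper's approach: the paper's own proof is a bare list of citations (\cite[Proposition~13.16 and 13.19]{Cab-Eng04} for (i), (iii), (iv), the definition of $e$-split Levi subgroups for (ii), and \cite[Lemma~3.10]{Ros-Homotopy} for (v)), and what you have written is essentially an unpacking of those references. Your argument for (v) in particular reproduces the content of \cite[Lemma~3.10]{Ros-Homotopy}, including the key identification $\z^\circ(\H)^F_\ell=(\z^\circ(\H)_{\Phi_e})^F_\ell$ under the $\pi(\G,F)$ hypotheses; note only that your sketch of (iii) tacitly uses $e=e_\ell(q)$ (needed so that $\S^F_\ell$ is large enough to recover $\L$ as its centraliser), which is the standing assumption in the section even though the lemma statement lists it explicitly only in (v).
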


\begin{proof}
The statement in (i) follows from \cite[Proposition 13.16 (ii)]{Cab-Eng04} while (ii) is an immediate consequence of the definition of $e$-split Levi subgroup. For (iii) and (iv) see \cite[Proposition 13.19]{Cab-Eng04} and \cite[Proposition 13.16 (i)]{Cab-Eng04} respectively. Finally (v) follows from \cite[Lemma 3.10]{Ros-Homotopy}.
\end{proof}

Next, we recall the notion of connected subpairs and of twisted block induction introduced in \cite[Section 2]{Cab-Eng99}. We say that $(U,b_U)^\circ$ is a \textit{connected subpair} of $(\G,F)$ if $U$ is an abelian $\ell$-subgroup of $\G^F$ such that $U\leq \c_\G^\circ(U)$ and $b_U$ is a Brauer $\ell$-block of $\c_\G^\circ(U)^F$. If $V$ is another abelian $\ell$-subgroup of $\G^F$ with $V\leq U$, then there exists a unique Brauer $\ell$-block $b_V$ of $\c_\G^\circ(V)^F$ such that $\Br_U(b_V)b_U\neq 0$ in which case we write $(V,b_V)^\circ\lhd (U,b_U)^\circ$. Observe that, because the index of $\c_\G^\circ(U)^F$ in $\c_{\G^F}(U)$ is a power of $\ell$, there exists a unique $\ell$-block $\wh{b}_U$ of $\c_{\G^F}(U)$ that covers $b_U$. The relation between the Brauer pair $(U,\wh{b}_U)$ and the connected subpair $(U,b_U)^\circ$ is described in \cite[Proposition 2.2]{Cab-Eng99} and will be used in what follows without further reference. Next, assume that $\ell$ is good for $\G$ and consider an $e_\ell(q)$-split Levi subgroup $\L$ of $(\G,F)$. For any $\ell$-block $b_\L$ of $\L^F$ it was shown in \cite[Theorem 2.5]{Cab-Eng99}, using Deligne--Lusztig induction and assuming that $\ell$ is a good prime, that $b_\L$ corresponds to a unique $\ell$-block $b_\G$ of $\G^F$. This uniquely defined $\ell$-block of $\G^F$ is denoted by $b_\G=\R_\L^\G(b_\L)$ (see \cite[Notation 2.6]{Cab-Eng99}). Furthermore, whenever $\L=\c_\G^\circ(\z(\L)^F_\ell)$, we have the inclusion of connected subpairs $(1,\R_\L^\G(b_\L))^\circ\lhd(\z(\L)^F_\ell,b_\L)^\circ$.

Using the notion of twisted block induction we can now introduce an analogue of the simplicial complex $\Delta(\mathcal{S}_\ell^\star(B))$ adapted to finite reductive groups. These two complexes will then be shown to be homotopy equivalent.

\begin{defin}
Assume that $\ell$ is good for $\G$ and let $e=e_\ell(q)$. For every $\ell$-block $B$ of $\G^F$ we define the set $\CL_e^\star(B)$ of pairs $(\L,b_\L)$ where $\L$ is an $e$-split Levi subgroup of $(\G,F)$ such that $\L<\G$ and $b_\L$ is an $\ell$-block of $\L^F$ satisfying $\R_\L^\G(b_\L)=B$.
\end{defin}

Notice that the set $\CL^\star_e(B)$ defined above can be empty. However, under suitable assumptions on $\ell$, this cannot happen if the block $B$ has positive defect (see \cite[Lemma 4.8]{Ros-Homotopy}). Observe also that $\G^F$-acts by conjugation on the set $\CL_e^\star(B)$ and consider the order relation on $\CL_e^\star(B)$ given by $(\L,b_\L)\leq (\K,b_\K)$ if and only if $\R_\L^\K(b_\L)=b_\K$, for every $(\L,b_\L)$ and $(\K,b_\K)$ belonging to $\CL_e^\star(B)$. Then, we can construct a $\G^F$-simplicial complex $\Delta(\CL_e^\star(B))$ as explained in Section \ref{sec:Preliminaries}. We point out that the order relation $\leq$ defined above is closely related to the one introduced in \cite[Notation 1.11]{Cab-Eng99} (see also \cite[Section 3 and Section 4]{Ros-Generalized_HC_theory_for_Dade}).

Our aim is now to show that the simplicial complex of Brauer pairs $\Delta(\mathcal{S}_\ell^\star(B))$ is homotopy equivalent to $\Delta(\CL_e^\star(B))$. First we construct a suitable underlying map of posets. By Proposition \ref{prop:Almost centric pairs and homotopy equivalence} we can restrict our attention to the poset of almost-centric abelian Brauer pairs. 

\begin{prop}
\label{prop:Poset map for almost-centric Brauer pairs and e-split pairs}
Suppose that $\ell\in\pi(\G,F)$ does not divide the order of $\z(\G)^F$. For every $\ell$-block $B$ of $\G^F$ with non-trivial defect there exists a map of $\G^F$-posets
\[\phi:\ab_\ell^\star(B)^{\rm ac}\to\CL_e^\star(B)^{\rm op}\]
given by sending a Brauer pair $(Q,b_Q)$ to the pair $(\L,b_\L)$ where $\L=\c_\G(\z^\circ(\c_\G^\circ(Q))_{\Phi_e})$ and the block $b_\L$ is determined by the inclusion $(\z(\L)^F_\ell,b_\L)\leq (Q,b_Q)$.
\end{prop}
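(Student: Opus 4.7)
The plan is to first establish the equality $Q=\z(\L)^F_\ell$, which collapses the ``inclusion'' $(\z(\L)^F_\ell,b_\L)\leq(Q,b_Q)$ in the statement to an equality of subgroup parts and hence uniquely identifies $b_\L$ with $b_Q$. Once this is done, both the membership $(\L,b_\L)\in\CL_e^\star(B)$ and the order-reversing property of $\phi$ will follow from applications of Cabanes--Enguehard's characterisation of twisted block induction \cite[Theorem 2.5]{Cab-Eng99}.

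I would first check that $\L$ is a well-defined proper $e$-split Levi subgroup. Since $\ell$ is good for $\G$, Lemma \ref{lem:Centralisers of abalian ell-subgroups}(i)--(ii) give that $\H:=\c_\G^\circ(Q)$ is an $F$-stable Levi and $\L=\c_\G(\z^\circ(\H)_{\Phi_e})$ is $e$-split; part (v) together with the hypotheses $\ell\nmid|\z(\G)^F|$ and $Q\neq 1$ yields $\L<\G$.

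The heart of the argument is the equality $Q=\z(\L)^F_\ell$. For $Q\subseteq\z(\L)^F_\ell$, the almost-centric condition on $Q$ combined with Lemma \ref{lem:Centralisers of abalian ell-subgroups}(iv) gives $Q=\z(\c_{\G^F}(Q))_\ell=\z(\H)^F_\ell$, and since $\ell\in\pi(\G,F)$ the $\ell$-part of the torus $\z^\circ(\H)$ is concentrated in its Sylow $\Phi_e$-torus, so $Q\leq\z^\circ(\H)_{\Phi_e}^F\leq\z(\L)^F_\ell$ using that $\z^\circ(\H)_{\Phi_e}\leq\z^\circ(\L)$ by the very definition of $\L$. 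Conversely, $\z^\circ(\L)$ is a connected subgroup centralising $Q$, hence $\z^\circ(\L)\leq\c_\G^\circ(Q)=\H$ and therefore $\z^\circ(\L)\leq\z^\circ(\H)$, so taking $\ell$-parts of $F$-fixed points (and appealing again to the hypotheses on $\ell$ to identify $\z(\L)^F_\ell$ with $\z^\circ(\L)^F_\ell$) gives $\z(\L)^F_\ell\leq Q$. With this identification in hand, Lemma \ref{lem:Centralisers of abalian ell-subgroups}(iii)--(iv) yield $\L^F=\c_{\G^F}(Q)$, and the block $b_\L:=b_Q$ is the unique one making the stated inclusion hold.

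The containment $(1,B)\leq(\z(\L)^F_\ell,b_\L)$ of Brauer pairs then corresponds to the connected subpair inclusion $(1,B)^\circ\lhd(\z(\L)^F_\ell,b_\L)^\circ$, and \cite[Theorem 2.5]{Cab-Eng99} gives $\R_\L^\G(b_\L)=B$, placing $(\L,b_\L)$ in $\CL_e^\star(B)$. For the poset property, given $(Q_1,b_1)\leq(Q_2,b_2)$ the equalities $Q_i=\z(\L_i)^F_\ell$ give $\L_1\supseteq\L_2$, and the identification $\L_1^F=\c_{\G^F}(Q_1)$ allows one to reinterpret the Brauer pair relation in $\G$ as the relation $(1,b_1)\leq(Q_2,b_2)$ of Brauer pairs inside $\L_1$; a second application of \cite[Theorem 2.5]{Cab-Eng99}, now within $\L_1$, yields $\R_{\L_2}^{\L_1}(b_2)=b_1$, which is exactly $(\L_1,b_1)\leq(\L_2,b_2)$ in $\CL_e^\star(B)^{\rm op}$. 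The $\G^F$-equivariance is immediate from the canonical nature of all the constructions involved.

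The step I expect to give the most trouble is the identification $Q=\z(\L)^F_\ell$, since it repeatedly invokes the generic Sylow dictionary (e.g.\ $\z^\circ(\H)^F_\ell=\z^\circ(\H)_{\Phi_e}^F$ and $\z(\L)^F_\ell=\z^\circ(\L)^F_\ell$) and requires careful verification that the conditions of Definition \ref{def:Prime condition} are inherited by the intermediate Levi subgroups $\H$, $\L$, and $\L_1$ — especially in view of the paper's own warning that $\ell\in\pi(\G,F)$ need not imply $\ell\in\pi(\L,F)$.
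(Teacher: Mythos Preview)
Your central claim $Q=\z(\L)^F_\ell$ is false, and it is exactly the step you flagged as worrisome. The inclusion $\z(\L)^F_\ell\leq Q$ that you prove for the ``converse'' direction is correct and is all the paper uses; but the forward inclusion $Q\leq\z(\L)^F_\ell$ fails because the assertion ``the $\ell$-part of the torus $\z^\circ(\H)$ is concentrated in its Sylow $\Phi_e$-torus'' is not a consequence of $\ell\in\pi(\G,F)$. The torus $\z^\circ(\H)$ may have a non-trivial $\Phi_{e\ell^i}$-part for some $i\geq 1$, and since $\ell$ divides $\Phi_{e\ell^i}(q)$, this contributes $\ell$-elements to $\z^\circ(\H)^F$ that lie outside $(\z^\circ(\H)_{\Phi_e})^F$. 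Concretely, take $\G={\rm SO}_7$, $\ell=3$, $q$ with $3\mid q-1$ (so $e=1$; one checks $3\in\pi(\G,F)$ and $3\nmid|\z(\G)^F|=1$). Let $\T$ be a maximal torus with $|\T^F|=q^3-1=\Phi_1(q)\Phi_3(q)$ and set $Q=\T^F_3$, cyclic of order $3(q-1)_3$. Then $\c_\G^\circ(Q)=\T$ and $Q$ is almost-centric, but $\T_{\Phi_1}$ is a rank-one torus and $\L=\c_\G(\T_{\Phi_1})$ is a Levi of type $A_2$ (isomorphic to $\GL_3$) with $\z(\L)^F_3$ of order $(q-1)_3<|Q|$. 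So $\z(\L)^F_\ell\lneq Q$, and in particular $\L^F\neq\c_{\G^F}(Q)$.

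Once this equality fails, the rest of your argument collapses: $b_\L$ is a block of $\L^F$, not of $\c_{\G^F}(Q)$, so the identification $b_\L=b_Q$ is meaningless; and your order-reversing argument, which reinterprets the Brauer-pair inclusion inside $\L_1$ via $\L_1^F=\c_{\G^F}(Q_1)$, has no footing. The paper's proof works instead with the genuine strict inclusion $\z(\L)^F_\ell\leq Q$, defines $b_\L$ as the unique block with $(\z(\L)^F_\ell,b_\L)\leq(Q,b_Q)$ via \cite[Theorem 6.3.3]{Lin18II}, and then establishes the order-reversing property by a diagram chase comparing the four Brauer pairs sitting over $\z(\L)^F_\ell\leq\z(\K)^F_\ell\leq P$ and $\z(\L)^F_\ell\leq Q\leq P$, using uniqueness in \cite[Theorem 6.3.3]{Lin18II} and then \cite[Proposition 2.2 and Theorem 2.5]{Cab-Eng99} to pass from Brauer-pair inclusions to $\R_\K^\L(b_\K)=b_\L$.
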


\begin{proof}
Fix $(Q,b_Q)\in\ab_\ell^\star(B)^{\rm ac}$ and define $\H:=\c_\G^\circ(Q)$ and $\L:=\c_\G(\z^\circ(\H)_{\Phi_e})$. By Lemma \ref{lem:Centralisers of abalian ell-subgroups} (i-ii) we know that $\H$ is an $F$-stable Levi subgroup and $\L$ an $e$-split Levi subgroup of $(\G,F)$ with $\H\leq \L$. In particular, it follows from the definition of Levi subgroup that $\z(\L)^F_\ell\leq \z(\H)^F_\ell$. On the other hand, $\z(\H)^F_\ell=\z(\H^F)_\ell$ while $\H^F=\c_{\G^F}(Q)$ by Lemma \ref{lem:Centralisers of abalian ell-subgroups} (iv). Now, using the fact that $Q$ is almost-centric, we have $\z(\H)^F_\ell=\z(\c_{\G^F}(Q))_\ell=Q$ and therefore we conclude that $\z(\L)^F_\ell\leq Q$. Then, according to \cite[Theorem 6.3.3]{Lin18II} there exists a unique block $b_\L$ of $\c_{\G^F}(\z(\L)^F_\ell)$ such that $(\z(\L)^F_\ell,b_\L)\leq (Q,b_Q)$. Observe that $\c_{\G^F}(\z(\L)^F_\ell)=\L^F$ by Lemma \ref{lem:Centralisers of abalian ell-subgroups} (iii-iv) and so $b_\L$ is a block of $\L^F$. We now define
\[\phi(Q,b_Q):=(\L,b_\L)\]
and claim that $\phi$ is a well-defined map of $\G^F$-posets. To start, we check that the pair $(\L,b_\L)$ actually belongs to $\CL_e^\star(B)$. Since $Q$ is non-trivial and $\ell$ does not divide the order of $\z(\G)^F$, it follows from Lemma \ref{lem:Centralisers of abalian ell-subgroups} (v) that $\L<\G$. Next, once again using the fact that $\L=\c_\G^\circ(\z(\L)^F_\ell)$, we can apply \cite[Theorem 2.5]{Cab-Eng99} to obtain the inclusion of connected subpairs $(1,\R_\L^\G(b_\L))^\circ\lhd (\z(\L)^F_\ell,b_\L)^\circ$. The latter is equivalent to the inclusion of Brauer pairs $(1,\R_\L^\G(b_\L))\leq(\z(\L)^F_\ell,b_\L)$ according to \cite[Proposition 2.2 (v)]{Cab-Eng99} because $\ell$ does not divide $|\z(\G^*)^F:\z^\circ(\G^*)^F|$. Then, we get
\[\left(1,\R_\L^\G(b_\L)\right)\leq \left(\z(\L)^F_\ell,b_\L\right)\leq \left(Q,b_Q\right)\]
and therefore that $\R_\L^\G(b_\L)=B$ because $(Q,b_Q)$ is a $B$-Brauer pair and by the uniqueness part of \cite[Theorem 6.3.3]{Lin18II}. This shows that $(\z(\L)^F_\ell,b_\L)$ belongs to the set $\CL_e^\star(B)$. Furthermore, it follows from the construction of $(\L,b_\L)=\phi(Q,b_Q)$ that for every $g\in \G^F$ we have $\L^g=\c_\G(\z^\circ(\c_\G^\circ(Q^g))_{\Phi_e})$ and $(\L^g,b_\L^g)\leq (Q^g,b_Q^g)$. In other words, we have $\phi((Q,b_Q)^g)=\phi(Q,b_Q)^g$. To conclude, let $(P,b_P)\in\ab_\ell^\star(B)^{\rm ac}$ satisfying $(Q,b_Q)\leq (P,b_P)$ and set $(\K,b_\K):=\phi(P,b_P)$. We want to show that $(\K,b_\K)\leq (\L,b_\L)$. By the discussion above, we already know that $\z(\L)^F_\ell\leq Q$ and similarly that $\z(\K)^F_\ell\leq P$. Moreover, using the fact that $Q\leq P$, we deduce that $\c_\G^\circ(P)\leq \c^\circ_\G(Q)$ and therefore that $\K\leq \L$ and $\z(\L)^F_\ell\leq \z(\K)^F_\ell$.
\begin{center}
\begin{tikzpicture}[x=2.5cm, y=1cm]
\node(N) at (0,0) {$\z(\L)^F_\ell$};
\node(K) at (0,2) {$Q$};
\node(Nx) at (1,1) {$\z(\K)^F_\ell$};
\node(Kx) at (1,3) {$P$};

\node(a) at (-0.5,0) {$b_\L$, $c_\L$};
\node(b) at (-0.5,2) {$b_Q$};
\node(c) at (1.5,1) {$b_\K$};
\node(d) at (1.5,3) {$b_P$};

\path[-]

(N) edge node {} (K)
(N) edge node {} (Nx)
(K) edge node {} (Kx)
(Nx) edge node {} (Kx);
\end{tikzpicture}
\end{center}
By the definition of $\phi$ we know that $(\z(\L)^F_\ell,b_\L)\leq (Q,b_Q)$ and $(\z(\K)^F_\ell,b_\K)\leq (P,b_P)$. Let now $c_\L$ be the unique block satisfying $(\z(\L)^F_\ell,c_\L)\leq (\z(\K)^F_\ell,b_\K)$ and observe that it satisfies $(\z(\L)^F_\ell,c_\L)\leq (P,b_P)$. On the other hand, since $(Q,b_Q)\leq (P,b_P)$ we get $(\z(\L)^F_\ell,b_\L)\leq (P,b_P)$ and the uniqueness of \cite[Theorem 6.3.3]{Lin18II} implies that $c_\L=b_\L$. This shows that $(\z(\L)^F_\ell,b_\L)\leq (\z(\K)^F_\ell,b_\K)$ holds inside $(\G,F)$ which is true if and only if $(1,b_\L)\leq (\z(\K)^F_\ell,b_\K)$ holds inside $(\L,F)$ according to \cite[Proposition 2.2 (i)]{Cab-Eng99} and because $\ell$ does not divide $|\z(\G^*)^F:\z^\circ(\G^*)^F|$. Then, by applying \cite[Theorem 2.5]{Cab-Eng99} to the block $b_\K$ of the $e$-split Levi subgroup $\K$ of $(\L,F)$, we conclude that $b_\L=\R_\K^\L(b_\K)$. This finally shows that $(\K,b_\K)\leq (\L,b_\L)$ as wanted and the proof is now complete.
\end{proof}

Next, we show that the fibres of the map $\phi$ are contractible. Recall that, given a poset $\mathcal{X}$ and an element $x\in\mathcal{X}$, we denote by $\mathcal{X}_{\geq x}$ the subposet consisting of those $y\in\mathcal{X}$ such that $x\leq y$.

\begin{lem}
\label{lem:Contractibility of fibers}
Consider the setting of Proposition \ref{prop:Poset map for almost-centric Brauer pairs and e-split pairs}. For every pair $(\L,b_\L)$ of $\CL_e^\star(B)$, the simplicial complex $\Delta(\mathcal{X})$ induced by the fibre $\mathcal{X}:=\phi^{-1}(\CL_e^\star(B)_{\geq (\L,b_\L)}^{\rm op})$ is $\n_{\G^F}(\L,b_\L)$-contractible.
\end{lem}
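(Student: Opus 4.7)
The plan is to exhibit the pair $(Z,b_\L)$, where $Z := \z(\L)^F_\ell$, as an $\n_{\G^F}(\L,b_\L)$-invariant minimum of $\mathcal{X}$; the conclusion would then follow at once from \cite[Corollary 1.3]{Ros-Homotopy}.

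First I would verify that $(Z,b_\L)$ lies in $\ab_\ell^\star(B)^{\rm ac}$. Non-triviality of $Z$ holds because $\L<\G$ is $e$-split for $e = e_\ell(q)$, so $\z^\circ(\L)_{\Phi_e}\neq 1$ and its $F$-fixed points have order divisible by $\Phi_e(q)$, which is divisible by $\ell$. Almost-centricity follows from Lemma \ref{lem:Centralisers of abalian ell-subgroups} (iii)--(iv), giving $\c_{\G^F}(Z) = \L^F$ and hence $Z = \z(\L^F)_\ell = \z(\c_{\G^F}(Z))_\ell$. The fact that $(Z,b_\L)$ is a $B$-Brauer pair follows from the hypothesis $\R_\L^\G(b_\L) = B$ together with \cite[Theorem 2.5]{Cab-Eng99} and \cite[Proposition 2.2]{Cab-Eng99}, which convert the connected-subpair relation $(1,B)^\circ\lhd(Z,b_\L)^\circ$ into the Brauer-pair inclusion $(1,B)\leq(Z,b_\L)$. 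A direct computation, using $\c_\G^\circ(Z) = \L$ and the $e$-split identity $\L = \c_\G(\z^\circ(\L)_{\Phi_e})$, then yields $\phi(Z,b_\L) = (\L,b_\L)$, so $(Z,b_\L)\in\mathcal{X}$.

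The main step is to show $(Z,b_\L)\leq(Q,b_Q)$ for every $(Q,b_Q)\in\mathcal{X}$. Writing $(\K,b_\K) := \phi(Q,b_Q)$, the definition of $\phi$ provides the Brauer-pair inclusion $(\z(\K)^F_\ell,b_\K)\leq(Q,b_Q)$ inside $\G^F$, while membership in the fibre translates into the block identity $\R_\K^\L(b_\K) = b_\L$. Applying \cite[Theorem 2.5]{Cab-Eng99} in the ambient reductive pair $(\L,F)$, with $\K$ viewed as an $e$-split Levi of $\L$, yields the connected-subpair inclusion $(1,b_\L)^\circ\lhd(\z(\K)^F_\ell,b_\K)^\circ$ inside $\L^F$. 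This is promoted to the Brauer-pair inclusion $(\z(\L)^F_\ell,b_\L)\leq(\z(\K)^F_\ell,b_\K)$ inside $\G^F$ via \cite[Proposition 2.2 (i)]{Cab-Eng99}, exactly as in the final paragraph of the proof of Proposition \ref{prop:Poset map for almost-centric Brauer pairs and e-split pairs}. Chaining the two inclusions gives $(Z,b_\L)\leq(Q,b_Q)$, as required.

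Finally, $Z$ is characteristic in $\L^F$ (hence $\n_{\G^F}(\L)$-invariant) and $b_\L$ is $\n_{\G^F}(\L,b_\L)$-stable by definition, so $(Z,b_\L)$ is an $\n_{\G^F}(\L,b_\L)$-invariant minimum of $\mathcal{X}$, and \cite[Corollary 1.3]{Ros-Homotopy} would then complete the argument. The one delicate point in this plan is the transition between connected subpairs inside $\L^F$ and Brauer pairs inside $\G^F$: one has to keep careful track of the hypotheses on $\ell$ — in particular that $\ell\nmid|\z(\G^*)^F:\z^\circ(\G^*)^F|$ and that this condition descends from $\G$ to $\L$, as recorded in the discussion following Definition \ref{def:Prime condition} — in order to legitimately invoke \cite[Proposition 2.2 (i)]{Cab-Eng99}.
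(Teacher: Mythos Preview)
Your proposal is correct and follows essentially the same approach as the paper: both identify $(\z(\L)^F_\ell,b_\L)$ as an $\n_{\G^F}(\L,b_\L)$-invariant minimum of $\mathcal{X}$, verify membership via Lemma~\ref{lem:Centralisers of abalian ell-subgroups} and the computation $\phi(\z(\L)^F_\ell,b_\L)=(\L,b_\L)$, and establish minimality by converting $\R_\K^\L(b_\K)=b_\L$ into a Brauer-pair inclusion through \cite[Theorem~2.5 and Proposition~2.2]{Cab-Eng99} before chaining with $(\z(\K)^F_\ell,b_\K)\leq(Q,b_Q)$. Your additional remarks on the non-triviality of $Z$ and on the descent of the hypothesis $\ell\nmid|\z(\G^*)^F:\z^\circ(\G^*)^F|$ to $\L$ are welcome elaborations but do not change the argument.
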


\begin{proof}
We claim that the pair $(\z(\L)^F_\ell,b_\L)$ is the minimum of the poset $\mathcal{X}$. First, observe that $\L^F=\c_{\G^F}(\z(\L)^F_\ell)$ by Lemma \ref{lem:Centralisers of abalian ell-subgroups} (iii-iv) and therefore that $\z(\L)^F_\ell$ is an almost-centric abelian $\ell$-subgroup of $\G^F$ as defined in Section \ref{sec:Preliminaries}. Then, since $(\L,b_\L)$ coincides with $\phi(\z(\L)^F_\ell,b_\L)$, it follows that $(\z(\L)^F_\ell,b_\L)$ belongs to the fibre $\mathcal{X}$. Consider now a Brauer pair $(P,b_P)\in\mathcal{X}$ and set $(\K,b_\K):=\phi(P,b_P)$. By the definition of $\mathcal{X}$ we have $(\K,b_\K)\leq (\L,b_\L)$ and so $b_\L=\R_\K^\L(b_\K)$. Then, \cite[Theorem 2.5]{Cab-Eng99} shows that $(1,b_\L)^\circ\leq (\z(\K)^F_\ell,b_\K)^\circ$ holds in $(\L,F)$. By \cite[Proposition 2.2 (i) and (v)]{Cab-Eng99} the latter is equivalent to the inclusion of Brauer pairs $(\z(\L)^F_\ell,b_\L)\leq (\z(\K)^F_\ell,b_\K)$. However, by the definition of $\phi$, we know that $(\z(\K)^F_\ell,b_\K)\leq (P,b_P)$ and therefore we conclude that $(\z(\L)^F_\ell,b_\L)\leq (P,b_P)$. This shows that $(\z(\L)^F_\ell,b_\L)$ is the minimum of $\mathcal{X}$ and, since $(\z(\L)^F_\ell,b_\L)$ is also $\n_{\G^F}(\L,b_\L)$-invariant, the result follows by applying \cite[Corollary 1.3]{Ros-Homotopy}.
\end{proof}

We can finally prove Theorem \ref{thm:Main, Homotopy equivalence in non-defining characteristic}. This follows by combining Proposition \ref{prop:Almost centric pairs and homotopy equivalence}, Proposition \ref{prop:Poset map for almost-centric Brauer pairs and e-split pairs}, and Lemma \ref{lem:Contractibility of fibers}.

\begin{proof}[Proof of Theorem \ref{thm:Main, Homotopy equivalence in non-defining characteristic}]
Set $e=e_\ell(q)$. By applying Proposition \ref{prop:Almost centric pairs and homotopy equivalence} and recalling that opposite posets induce homeomorphic simplicial complexes, it is enough to show that $\Delta(\ab_\ell^\star(B)^{\rm ac})$ is $\G^F$-homotopy equivalent to $\Delta(\CL_e^\star(B)^{\rm op})$. For this purpose, consider the map $\phi$ of $\G^F$-posets constructed in Proposition \ref{prop:Poset map for almost-centric Brauer pairs and e-split pairs}. Then, the induced map $\Delta(\phi)$ of simplicial complexes is a $\G^F$-homotopy equivalence according to Quillen's Theorem A (see the formulation given in \cite[Lemma 1.1]{Ros-Homotopy}) whose hypotheses are satisfied thanks to Lemma \ref{lem:Contractibility of fibers}.
\end{proof}

We conclude the paper by observing that \cite[Theorem A]{Ros-Homotopy} is a direct consequence of our Theorem \ref{thm:Main, Homotopy equivalence in non-defining characteristic}. In fact, by Lemma \ref{lem:Principal block and Brown complex} we know that the Brown complex $\Delta(\mathcal{S}_\ell^\star(\G^F))$ is $\G^F$-homotopy equivalent to the simplicial complex $\Delta(\mathcal{S}_\ell^\star(B_0(\G^F)))$ where $B_0(\G^F)$ denotes the principal $\ell$-block of $\G^F$. On the other hand, if we denote by $\CL_{e_\ell(q)}^\star(\G,F)$ the poset of proper $e$-split Levi subgroups of $(\G,F)$ (see \cite[Section 1.4]{Ros-Homotopy}), then the assignment $\L\mapsto (\L,B_0(\L^F))$ defines an isomorphism between the $\G^F$-posets $\CL_{e_\ell(q)}^\star(\G,F)$ and $\CL_{e_\ell(q)}^\star(B_0(\G^F))$ thanks to Brauer's Third Main Theorem (see \cite[Theorem 6.3.14]{Lin18II}). Finally, Theorem \ref{thm:Main, Homotopy equivalence in non-defining characteristic} implies that $\Delta(\mathcal{S}_\ell^\star(B_0(\G^F)))$ is $\G^F$-homotopy equivalent to $\Delta(\CL_{e_\ell(q)}^\star(B_0(\G^F)))$ and we conclude that $\Delta(\mathcal{S}_\ell^\star(\G^F))$ is $\G^F$-homotopy equivalent to $\Delta(\CL_{e_\ell(q)}^\star(\G,F))$ as in \cite[Theorem A]{Ros-Homotopy}.

\bibliographystyle{alpha}
\bibliography{References}

\vspace{1cm}

%Department of Mathematical Sciences, Loughborough University, Loughborough, Leicestershire, LE$11$ $3$TU, United Kingdom.

Mathematisches Forschungsinstitut Oberwolfach
\\
Schwarzwaldstr. 9-11,
\\
77709 Oberwolfach-Walke,
\\
Germany

\textit{Email address:} \href{mailto:damiano.rossi.math@gmail.com}{damiano.rossi.math@gmail.com}

\end{document}